\newtheorem{thm}{Theorem}[section]
\newtheorem{lem}[thm]{Lemma}
\newtheorem{prop}[thm]{Proposition}
\newtheorem{cor}[thm]{Corollary}
\newtheorem{defi}[thm]{Definition}
\newtheorem{rem}[thm]{Remark}
\newtheorem*{MainThm}{Main Theorem}
\newtheorem*{Ackn}{Acknowledgement}
\newenvironment{proofof}[1]{\proof}{\endproof}
\DeclareMathAlphabet\mathbb{U}{fplmbb}{m}{n}
\newcommand{\OO}{\mathbb{O}}
\newcommand{\HH}{\mathbb{H}}
\newcommand{\CC}{\mathbb{C}}
\newcommand{\RR}{\mathbb{R}}     
\newcommand{\ZZ}{\mathbb{Z}}     
\newcommand{\NN}{\mathbb{N}}
\newcommand{\defeq}{\mathrel{\mathop{\raisebox{1.1pt}{\scriptsize$:$}}}=}
\newcommand\opna{\operatorname}
\newcommand\mf{\mathfrak}
\newcommand\mc{\mathcal}
\newcommand\mass{\operatorname{mass}}
\newcommand\Lip{\operatorname{Lip}}
\begin{document}

\title{\vspace*{-10mm}\Large Higher divergence for nilpotent Lie groups}
\date{}
\author{\large Moritz Gruber \footnote{The author is supported by the German Research Foundation (DFG) grant GR 5203/1-1}}
\maketitle


\begin{abstract}\noindent\small
\textbf{Abstract. } The higher divergence of a metric space describes its isoperimetric behaviour at infinity. It is closely related to the higher-dimensional Dehn functions, but has more requirements to the fillings. We prove that these additional requirements do not have an essential impact for many nilpotent Lie groups. As a corollary, we obtain the higher divergence  of the Heisenberg groups in all dimensions.
\end{abstract}

%
%
\pagenumbering{arabic}

\section{Introduction}\label{S1}

Thales' theorem of intersecting lines is one the classical theorems in Euclidean geometry. In the words of a modern geometer, it says that geodesic rays in the Euclidean space diverge from each other linearly. As asymptotic property, the speed of the divergence of geodesics is an interesting object to study in many metric spaces. There is a way to describe it by the needed length to connect two points on the boundary of a ball of radius $r$ by a curve which do not intersect the interior of the ball. This can be considered as an isoperimetric inequality and hence generalised to higher dimensions: fill $k$-cycles outside a ball with $(k+1)$-chains also outside the ball.  
So far, this \emph{higher divergence} has mainly been studied for spaces with non-positive curvature (see \cite{BradyFarb}, \cite{Leuzinger2000}, \cite{Wenger06}, \cite{ABDDY}).

In this article we examine the higher divergence of nilpotent Lie groups with Riemannian metrics. These are metric spaces with all three types of sectional curvature: negative, zero and positive (see \cite{Wolf}). We find out that for a class of nilpotent Lie groups the behaviour in low dimensions are the same as for the filling functions. As a corollary, we obtain the higher divergence of the Heisenberg groups $H^n_\CC$ in all dimensions. As far as we know, this is the first infinite family of metric spaces, not quasi-isometric to non-positively curved spaces, for which the higher divergence is known in all dimensions.

\begin{defi}\quad\\
Let $n$ be a positive integer and $G$ be stratified nilpotent Lie group with a uniform lattice $\Gamma \subset G$ with $s_2(\Gamma) \subset \Gamma$. We call $G$ \emph{$n$-approximable} if there are biLipschitz triangulations $(\tau, f)$ of $G$ and $(\eta,g)$ of $G \times [0,1]$ and a $n$-horizontal, $\Gamma$-equivariant map $\phi: \tau \to G$ and a $n$-horizontal, $s_2(\Gamma)$-equivariant map $\psi: \eta \to G$ such that:
\begin{enumerate}[\quad 1)]
\item $\phi$ is in finite distance to $f$, and
\item there are isomorphisms $\iota_i: g^{-1}(G \times i) \to (\tau, s_{2^i} \circ f)$ such that $g \circ \iota_i = s_{2^i} \circ f$ and $\psi \circ \iota_i = s_{2^i} \circ \phi$ for $i \in \{0,1\}$.
\end{enumerate}
\end{defi}

A rich class of such $n$-approximable nilpotent Lie groups is formed by the \emph{Jet groups} $J^m(\RR^n)$ (compare \cite{Warhurst},\cite{Young1}) which include the Heisenberg group $H^n_\CC$ as $J^1(\RR^n)$.

\clearpage

\begin{MainThm}\quad\\
Let $G$ be a stratified nilpotent Lie group equipped with a left-invariant Riemannian metric and let $m \in \NN$ be such, that $G$ is $(k+1)$-approximable. Then:
$$\opna{Div}_G^k(r) \preccurlyeq r^{k+1}$$
and if there is a number $K \in \NN$ such that for all $\Delta \in \eta^{(k+2)}$ holds $\mass(s_{t}(\psi(\Delta))) \precsim t^{K+2}$, then
$$\opna{Div}_G^{k+1}(r) \preccurlyeq r^{K+2}.$$
\end{MainThm}
\quad\\

As the complex Heisenberg group $H^n_\CC$ is $k$-approximable for $k \le n$ (see \cite{Young1}), the Main Theorem combined with the results on the higher divergence from \cite{Gruber1} yields:
\begin{cor}\label{CorHeis}\quad\\
For the $(2n+1)$-dimensional complex Heisenberg group $H^n_\CC$ equipped with a left-invariant Riemannian metric 
$$\opna{Div}_{H^n_\CC}^k (r)\sim  	   \begin{cases} r^{k+1} \hspace{7mm} \text{ for } 1\le k <n \\ 
											r^{n+2} \hspace{7mm} \text{ for } k=n \\ 
											r^\frac{k(k+2)}{k+1} \hspace{4mm} \text{ for } n< k <2n 
									\end{cases} $$
\end{cor}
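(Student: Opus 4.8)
The plan is to read off the three branches of the upper bound $\opna{Div}^k_{H^n_\CC}(r)\preccurlyeq(\,\cdot\,)$ from the two inequalities of the Main Theorem, using that $H^n_\CC$ is $k$-approximable for every $k\le n$ by \cite{Young1}, and to import the matching lower bounds in all degrees, together with the upper bound in the super-critical range $n<k<2n$, from \cite{Gruber1}. So the degrees split as follows: $1\le k<n$ and the critical degree $k=n$ are handled by the Main Theorem, while $n<k<2n$ and all of the $\succcurlyeq$-estimates are supplied by \cite{Gruber1}.

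For $1\le k<n$ we have $k+1\le n$, hence $H^n_\CC$ is $(k+1)$-approximable and the first inequality of the Main Theorem applies verbatim, giving $\opna{Div}^k_{H^n_\CC}(r)\preccurlyeq r^{k+1}$. For the critical degree $k=n$ the group fails to be $(n+1)$-approximable, so instead I use the second inequality of the Main Theorem with its index equal to $n-1$: its first hypothesis asks that $H^n_\CC$ be $n$-approximable, which again holds by \cite{Young1}, and its second hypothesis asks for $K\in\NN$ with $\mass(s_t(\psi(\Delta)))\precsim t^{K+2}$ for every $(n+1)$-cell $\Delta\in\eta^{(n+1)}$. I claim $K=n$ works. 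The dilations $s_t$ of $H^n_\CC$ scale the $2n$-dimensional horizontal stratum by $t$ and the one-dimensional centre by $t^2$, so the Jacobian of $s_t$ restricted to any $(n+1)$-plane is at most $t^{n}\cdot t^{2}=t^{n+2}$; therefore $\mass(s_t(\psi(\Delta)))\le t^{n+2}\,\mass(\psi(\Delta))\precsim t^{n+2}$, the images $\psi(\Delta)$ having uniformly bounded mass by equivariance and cocompactness of the triangulation. The Main Theorem then yields $\opna{Div}^n_{H^n_\CC}(r)\preccurlyeq r^{n+2}$.

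In the range $n<k<2n$ neither inequality of the Main Theorem is available, and here one appeals to \cite{Gruber1}, where the higher divergence of stratified nilpotent Lie groups in degrees above the step-dependent critical dimension is determined from filling estimates; for $H^n_\CC$ this produces the exponent $\tfrac{k(k+2)}{k+1}=(k+1)-\tfrac1{k+1}$. Finally, the lower bounds $\opna{Div}^k_{H^n_\CC}(r)\succcurlyeq r^{k+1}$ for $k<n$, $\succcurlyeq r^{n+2}$ for $k=n$, and $\succcurlyeq r^{k(k+2)/(k+1)}$ for $n<k<2n$ are established in \cite{Gruber1}, typically by exhibiting on the sphere of radius $r$ a $k$-cycle every filling of which that avoids the ball is forced to be large, the obstruction being transported to infinity by the dilations. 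Combining these lower bounds with the three upper bounds above gives the asserted equivalences $\sim$.

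The main obstacle is the critical degree $k=n$: one must recognise that only the weaker second inequality of the Main Theorem is applicable (because $(n+1)$-approximability genuinely fails, which is also why the exponent at degree $n$ is $n+2$ rather than the naively expected $n+1$), and one must check that the mass-scaling hypothesis really holds with $K=n$ and not some larger value --- any larger value would destroy the match with the lower bound $r^{n+2}$ coming from \cite{Gruber1}. The remainder is bookkeeping: confirming that the approximability index invoked in each application of the Main Theorem is $\le n$, and verifying that the exponents produced by \cite{Gruber1} agree with those of the Main Theorem on the overlap of the two arguments, in particular at $k=n$.
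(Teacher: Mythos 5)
Your proposal follows essentially the same route as the paper's (very terse) proof: upper bounds from the Main Theorem for $k\le n$, upper and lower bounds from \cite{Gruber1} for $n<k<2n$, and lower bounds for $k\le n$ imported from the literature. The one place where your attribution diverges from the paper is for the lower bounds in degrees $k<n$ and $k=n$: the paper cites \cite[Corollary 4.14]{Young1} and \cite[Theorem 8]{Young1} respectively, which supply the filling-function exponents $r^{k+1}$ and $r^{n+2}$ (the conversion from a filling-function lower bound to a divergence lower bound is where \cite{Gruber1}-type arguments enter — see also the paper's Remark, which makes explicit that the divergence lower bound in dimension $n$ hinges on the filling-function lower bound there); you fold all the lower bounds into a single citation to \cite{Gruber1}. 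Your explicit verification that $K=n$ works in the $k=n$ case — by observing that an $(n+1)$-plane whose tangent contains the one-dimensional centre of $\mathfrak{h}^n_\CC$ gets stretched by at most $t^n\cdot t^2=t^{n+2}$ under $s_t$, with equivariance and cocompactness giving a uniform bound on $\mass(\psi(\Delta))$ — is a correct and useful elaboration of a step the paper delegates to \cite[Theorem 8]{Young1}; just note that the bound $t^{n+2}$ is achieved only when the vertical direction is present, whereas a fully horizontal $(n+1)$-plane scales by $t^{n+1}$, so the statement should be read as a worst case. The inference that $H^n_\CC$ fails to be $(n+1)$-approximable (so that the second inequality of the Main Theorem, not the first, is the relevant one at $k=n$) is indeed the crux, and you identify it correctly.
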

\begin{proof} Using the Main Theorem, the growth of the $k$-dimensional divergence follows by 
\cite[Corollary 4.14]{Young1} for $k<n$, by \cite[Theorem 8]{Young1} for $k=n$ and 
by \cite[Corrollary 5.4]{Gruber1} for $k>n$.
\end{proof}

Therefore, we now know the higher divergence in all dimensions for the complex Heisenberg groups. 

\begin{rem}\quad\\
Using the results of \emph{\cite{Gruber1}} and \emph{\cite{Gruber2}}, we get the corresponding behaviour for the divergence of the quaternionic and octonionic Heisenberg groups $H_\HH^n$ and $H_\OO^n$ in the dimensions $\le n$ and codimensions $<n$. Only for the octonionic Heisenberg groups $H_\OO^n$ we haven't the lower bound for the divergence in dimension $n$, because of the missing lower bound for the filling function in this dimension.
\end{rem}

\begin{Ackn}\quad\\
The author is pleased to thank Robert Young for the many enlightening discussions during the development of this article and to thank Enrico Leuzinger for initially drawing the authors attention to this topic.
\end{Ackn}
%

%
%

\subsection*{Illustration of our approach}

We illustrate the idea behind the proof of our Main Theorem in the case of the Euclidean plane $\RR^2$. To be more precise, we show how to construct an $\rho r$-avoidant $1$-chain with a prescribed $r$-avoidant boundary by using an approximation technique. This technique will generalise to nilpotent Lie groups and higher dimensions (see section \ref{FaY}).\\

Let $\tau$ be the grid of squares of side length $1$ with vertices in $\ZZ^2$ and let $\tau_i=2^i\tau$ for $i \in \NN_0$ be the scaled grid of squares with side length $2^i$, and let $r>1$. \\

\begin{minipage}{0.3\textwidth}
\includegraphics[scale=0.3]{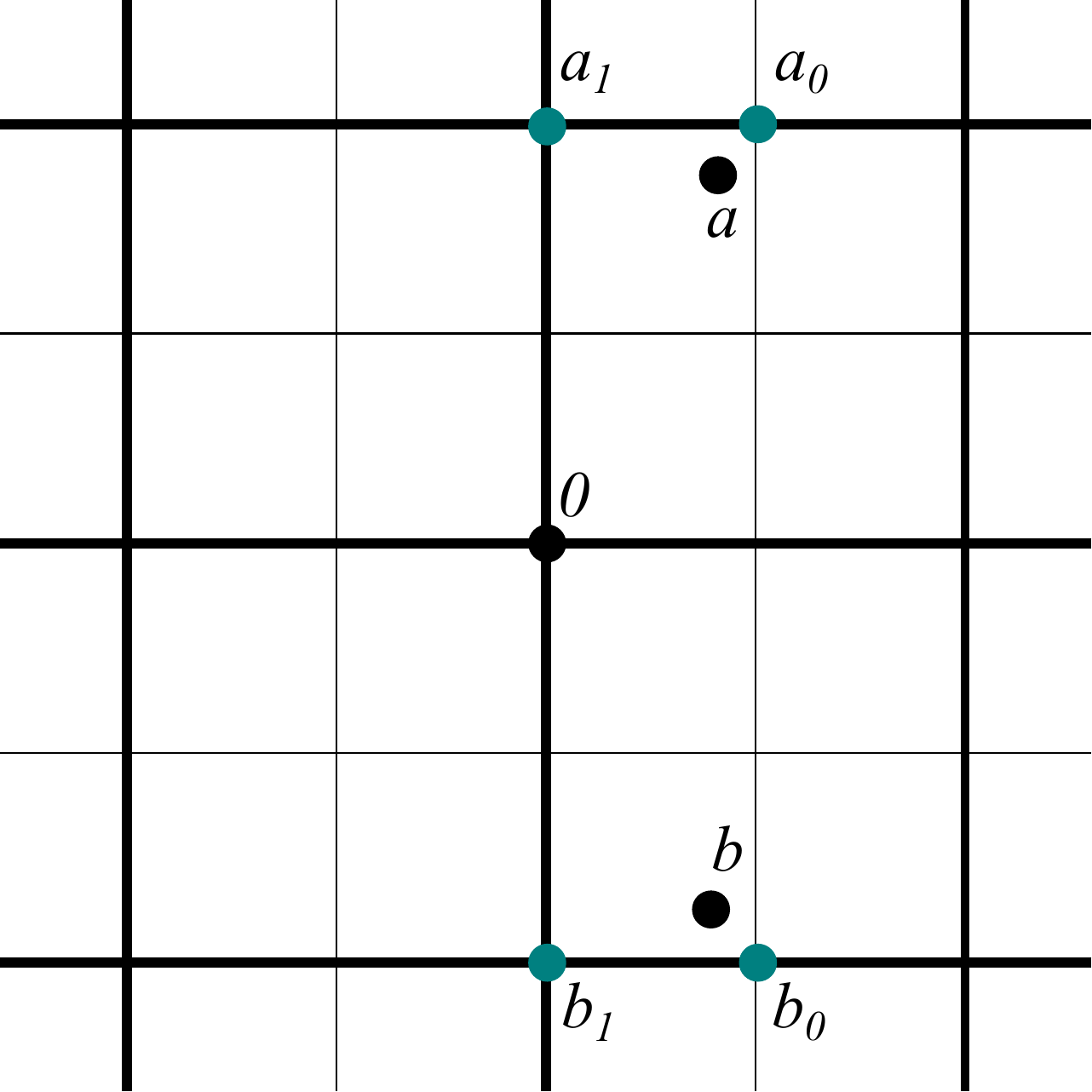}
\quad\\
\end{minipage}
\hfill
\begin{minipage}{0.65\textwidth}
Given a $0$-cycle $B=a-b$ with $d(B, 0)\ge r$, one approximates it by $B_0=a_0-b_0$ in the $0$-skeleton of the grid $\tau$.

Then one approximates $B$ in the grid $\tau_1$ by $B_1=a_1-b_1$. And further on by $0$-cycles $B_i=a_i-b_i$ in the $0$-skeleton of the grids $\tau_i$.

After finitely many steps, latest for $i_o$ with  $2^{i_o-2} \le d(a,b) < 2^{i_o-1}$, the points $a$ and $b$ will be approximated by the same point and therefore the cycle $B_{i_o}$ is trivial.\\
\vspace{3mm}
\end{minipage}

\begin{minipage}{0.3\textwidth}
\includegraphics[scale=0.3]{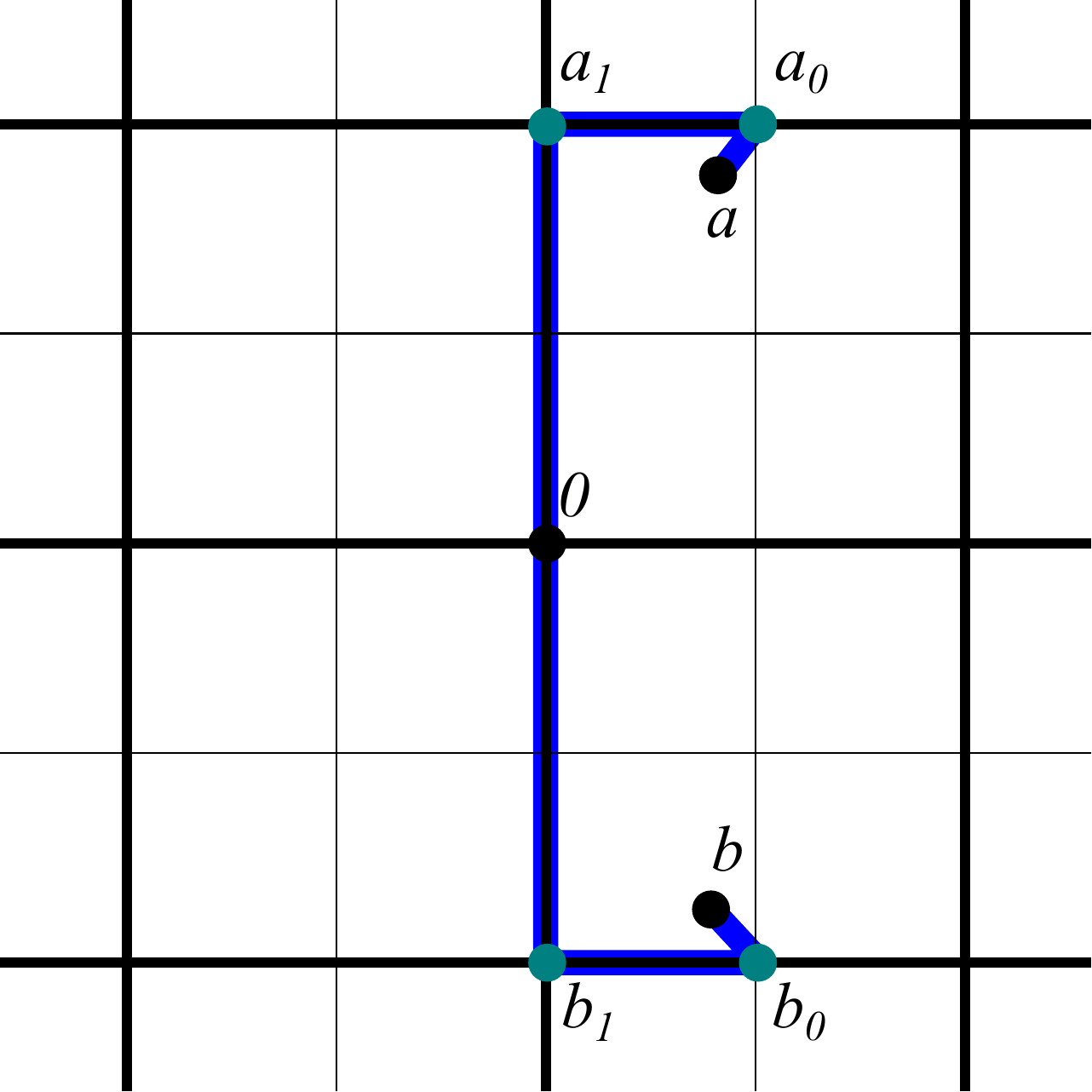}
\vspace{15mm}
\end{minipage}
\hfill
\begin{minipage}{0.65\textwidth}
Having all the above approximations of the $0$-cycle $B$, one can start to construct the filling. First, one connects the original points $a$ and $b$ by straight lines with their approximations $a_0$ and $b_0$. These lines are not longer than half the diameter of a square in $\tau$, i.e. at most $\frac{\sqrt{2}}{2}$. This is independent of the cycle.

Then one connects two consecutive approximations $B_i$ and $B_{i+1}$ by edges in the $1$-skeleton of $\tau_i$. One never needs more than two such edges to connect $a_i$ with $a_{i+1}$ and $b_i$ with $b_{i+1}$, and hence ends up with a $1$-chain $F$ connecting $a$ and $b$ and with 
$$\mass (F)\le \frac{\sqrt{2}}{2} + \sum _{i=0}^{i_o-1} 2\cdot 2^i \le \frac{\sqrt{2}}{2} + 2^{i_o+1} \le \frac{\sqrt{2}}{2} + 8 \cdot d(a,b) \sim d(a,b).$$
\end{minipage}
\quad\\\quad\\

The crux for our purpose is, as seen above, that even when the cycle to fill is far away from the base point $0$, the constructed filling may use edges containing the base point. To avoid this, we replace the base point when ever it is used in the approximation by one of its neighbours $x_1,x_2,x_3,x_4$ in the $0$-skeleton of the respective $\tau_i$, say $x_1$, and replace each edge connecting $0$ with $x_j, \ j\in \{1,2,3,4\}$ by a $1$-chain, not passing through $0$, connecting $x_1$ with the same $x_j$.\\

\begin{figure}[h]
\centering
\includegraphics[width=130mm]{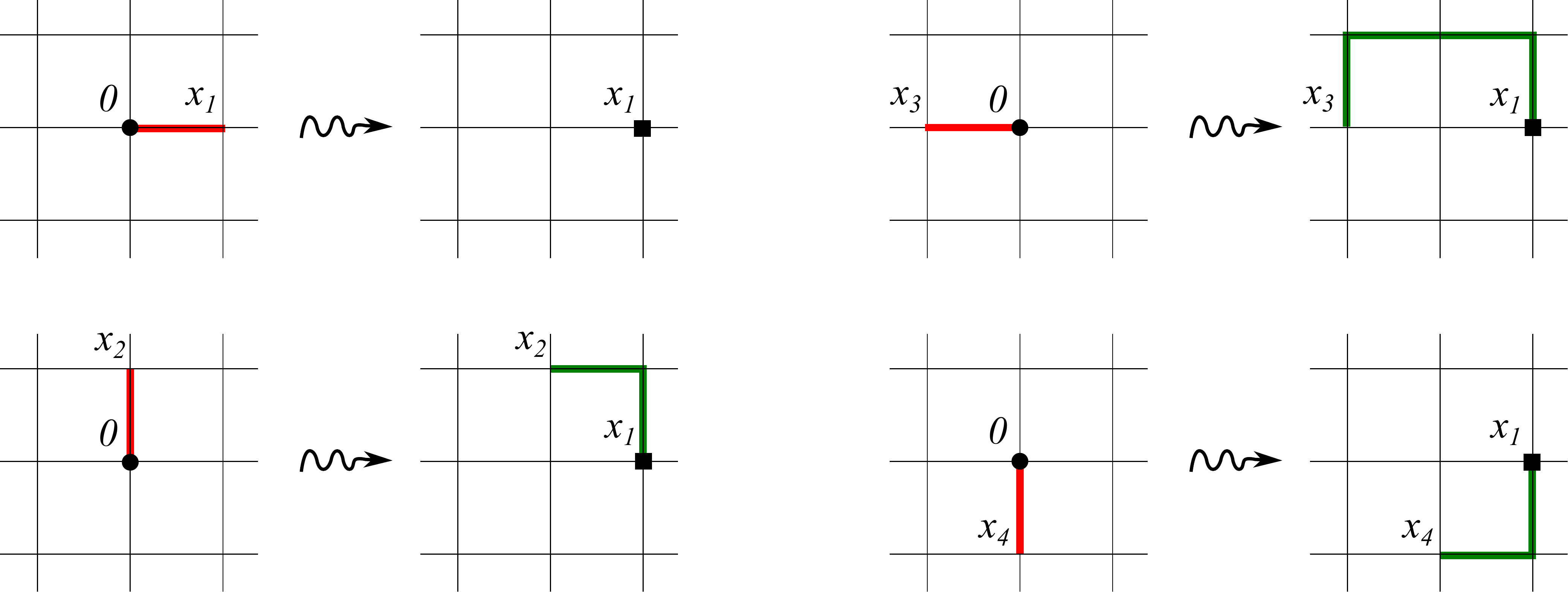}\vspace{3mm}
\end{figure}

We have to replace only finitely many edges, here $4$, and therefore get a maximal number $M$ of edges used in a replacement chain, here $4$, too.\\\quad\\

\begin{minipage}{0.3\textwidth}
\includegraphics[scale=0.3]{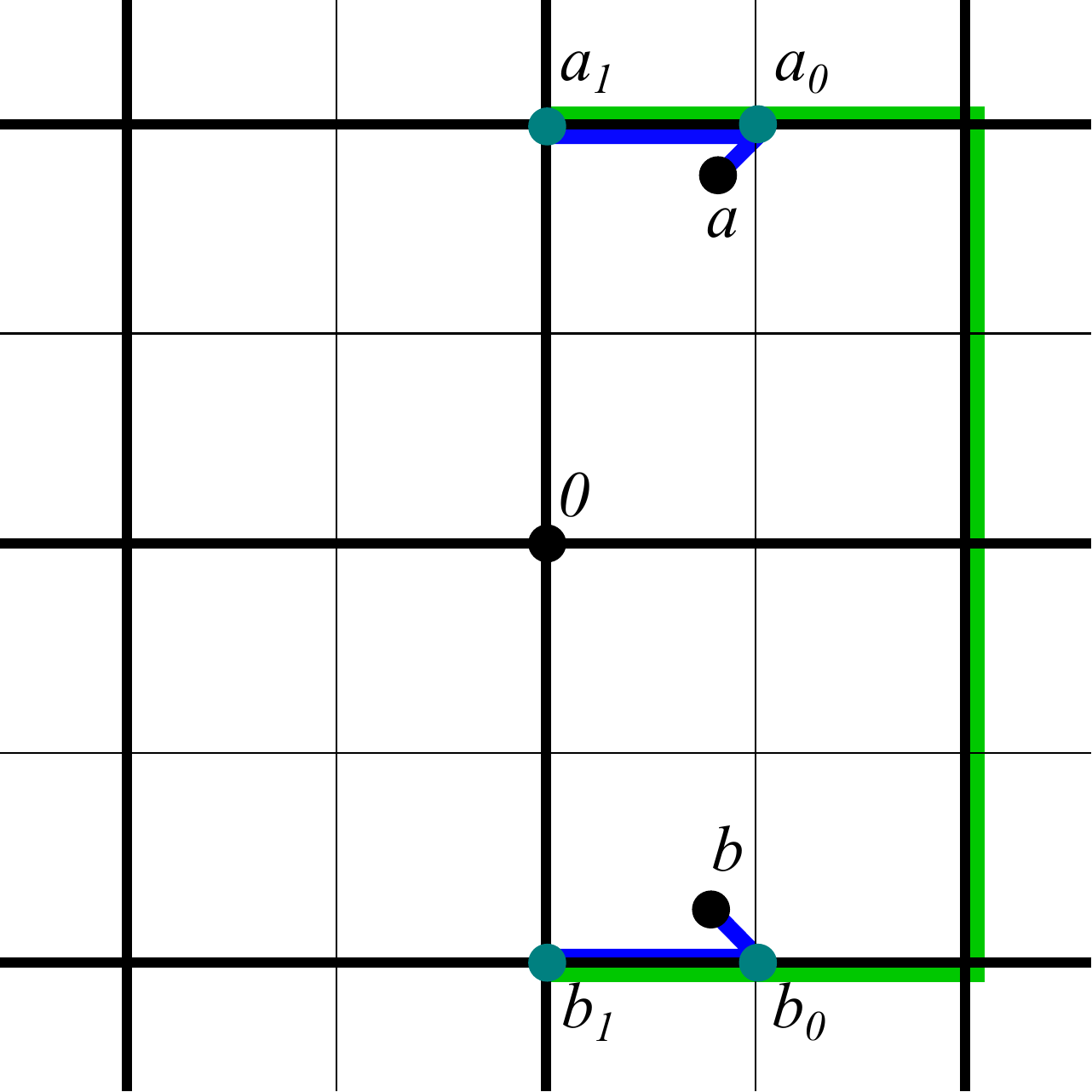}
\vspace{7mm}
\end{minipage}
\hfill
\begin{minipage}{0.65\textwidth}
If we now replace all the edges used in $F$ which contain $0$, we obtain a new $1$-chain $\widetilde F$ connecting $a$ and $b$. As we never substitute an edge by more than $M=4$ edges, all of the same length as the original edge, we get
$$\mass(\widetilde F) \le M \cdot \mass(F) \sim d(a,b).$$
Furthermore, the used simplices of $\tau_i$ will always stay in the $(2^i\sqrt{2})$-neighbourhood of $B$ and will never be nearer to $0$ than $2^i$. Hence 
$$d(\widetilde F, 0) \ge \min \big\{ \max\{d(B, 0)-2^i\sqrt{2},\ 2^i\} \mid i \in \NN_0 \big\} \ge \frac{1}{1+\sqrt{2}} r.$$ 

\end{minipage}


\section{Preliminaries}\label{S2}

\subsection{Higher Divergence}

Let $X$ be a Riemannian manifold or a simplicial complex with base point $x_o \in X$. For $r>0$ we call a Lipschitz chain $b$ in $X$ \emph{$r$-avoidant} if the image of $b$ has trivial intersection with the $r$-ball around $x_o$. The \emph{higher divergence} gives a measure of the difficulty to fill $r$-avoidant Lipschitz cycles by almost $r$-avoidant Lipschitz chains. We now give a brief definition of it, for more details see \cite{BradyFarb}, \cite{ABDDY} and \cite{Doktorarbeit}. 

\begin{defi}\quad\\
For $\rho \in (0,1]$ and $\alpha >0$ we set
$$\opna{div}^m_{X,\rho,\alpha}(r) \defeq \sup\big\{\inf\{\mass(b) \mid \partial b=a, \ b\ \rho r\text{-avoidant}\} \mid a\ r\text{-avoidant $m$-cycle, } \mass(a)\le \alpha r^m  \big\}.$$
The \emph{$m$-dimensional divergence} of $X$ is the $2$-parameter family
$$\opna{Div}^m_X \defeq\{\opna{div}^m_{X,\rho,\alpha}\}_{\rho,\alpha}$$
with $\rho \in (0,1]$ and $\alpha >0$.
\end{defi}
Usually one wants to see the higher divergence as a quasi-isometry invariant. For this purpose one considers its equivalence class by the relation defined in the following.

\begin{defi}\quad\\
Let $F=\{f_{s,t}\}$ and $H=\{h_{s,t}\}$ be two $2$-parameter families indexed over $s \in (0,1]$ and $t>0$ with increasing functions $f_{s,t}, h_{s,t}: \RR^+ \to \RR^+\cup \{\infty\}$. For fixed $m \in \NN$ we write $F \preccurlyeq_m H$ if there are thresholds $s_o$ and $t_o$, and constants $L,M \ge 1$ such that for all $s\le s_o$ and all $t\ge t_o$ there is a constant $A=A(s,t) \ge 1$ with
$$f_{s,t}(x) \le Ah_{Ls,Mt}(Ax+A) + O(x^m) \quad \forall x \in \RR^+.$$
If $F \preccurlyeq_m H$ and $H \preccurlyeq_m F$, we write $F \sim_m H$. This defines an equivalence relation.
\end{defi}

In the case of the $m$-dimensional divergence we will use the relations $\preccurlyeq_m$ and $\sim_m$. For brevity we will omit the index and only write $\preccurlyeq$ and $\sim$. For the same reason we denote the $2$-parameter family $F=\{f_{s,t}\}$ consisting of the same function $f$ for all indices, i.e. $f_{s,t}=f$ for all $s,t$, shortly by this function $f$. Further we will use for functions $f,h : \RR^+ \to \RR^+\cup \{\infty\}$ the notation $f\precsim h$ when there is a constant $C>0$ such that $f(x) \le Ch(x)+Cx+C$ for all $x\in \RR_+$.

\begin{rem}\label{Remr}\quad\\
The relation $\preccurlyeq$ (and consequently $\sim$) only captures the asymptotic behaviour of the functions for $r \to \infty$. 
Let $h:\RR_+ \to\RR_+\cup \{\infty\}$ be an increasing function. If  $r_o\ge1$ and $\opna{div}^m_{X,\rho,\alpha}(r_o)\le h(r_o)$ then $\opna{div}^m_{X,\rho,\alpha}(r)\le r_o\cdot h(r_o r+ r_o)  $ for all $r \le r_o$, because both sides are increasing. So we need to examine the relation $\preccurlyeq$ (and consequently $\sim$)  only for $r$ larger than an arbitrary constant $r_o=r_o(\rho,\alpha)\ge 1$.
\end{rem}


\subsection{Stratified nilpotent Lie groups}

We call a simply connected, $d$-step nilpotent Lie group $G$ with Lie algebra $\mf g$ \emph{stratified} if there are subspaces $V_i \subset \mf g$ with
$$\mf g = V_1 \oplus V_2 \oplus ... \oplus V_d \quad \text{and} \quad [V_j, V_1]=V_{j+1}$$
where $V_k=\{0\}$ for all $k>d$. For every stratified nilpotent Lie group $G$ there is a family $\{\hat s_t\}_{t>0}$ of Lie algebra automorphisms $\hat s_t: \mf g \to \mf g, (v_1,v_2,...,v_d) \mapsto (tv_1,t^2v_2,...,t^dv_d)$. These induce (uniquely determined) Lie group automorphisms $s_t : G \to G$ with differential $\hat s_t$ at the neutral element $1 \in G$, the so called \emph{scaling automorphisms} or \emph{dilations}. A special role in the geometry of stratified nilpotent Lie groups plays the \emph{horizontal distribution}
$$\mc H \defeq \bigcup_{g \in G} \opna{d}\!L_g V_1 \subset TG.$$
Let $G$ be equipped with a left-invariant Riemannian metric. A Lipschitz map $f:M \to G$ from a Riemannian manifold $M$ to $G$ is \emph{horizontal} if its image is almost everywhere tangent to $\mc H$. By this, one can define the \emph{Carnot-Carath\'eodory metric} $d_c(\cdot,\cdot)$ as the length metric of horizontal curves with the Riemannian length. This is a left-invariant metric on $G$ with 
$$d_c(s_t(x),s_t(y))=t \cdot d_c(x,y) \quad \forall x,y \in G\ \forall t>0.$$
If we denote the Riemannian distance by $d(\cdot,\cdot)$, one has $d(x,y)\le d_c(x,y)$ for all $x,y \in G$. Further, both metrics induce the same topology on $G$. This leads to the following useful estimate.

\begin{lem}\label{LemSkal}\quad\\
Let $G$ be a stratified nilpotent Lie group equipped with a left-invariant Riemannian metric and let $d(\cdot,\cdot)$ be the induced length metric and let $r>0$. Then there is a constant $C_r>0$ such that:
$$d(s_t(x),s_t(y))\le t \cdot C_r \cdot r \quad \forall t >0\ \forall x,y \in G \text{ with } d(x,y)\le r.$$
In particular, for all $x,y \in G$  with $d(x,y)=r$ this implies
$$d(s_t(x),s_t(y))\le t \cdot C_r \cdot d(x,y) \quad \forall t >0.$$
\end{lem}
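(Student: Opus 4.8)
The plan is to leverage the two facts stated just before the lemma: that $d \le d_c$ everywhere, and that the dilations act on the Carnot--Carath\'eodory metric by exact scaling, $d_c(s_t(x),s_t(y)) = t\, d_c(x,y)$. First I would fix $r > 0$ and consider the closed Riemannian $r$-ball $\overline{B}_d(1,r)$ around the identity. Since $d$ and $d_c$ induce the same topology and $G$ is locally compact, this ball is compact, so the continuous function $d_c(1,\cdot)$ attains a maximum on it; call it $R_r \defeq \sup\{ d_c(1,z) \mid d(1,z) \le r\} < \infty$. This is the step where I would be most careful: one must check that $\overline{B}_d(1,r)$ really is compact. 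This follows because $(G,d)$ is a complete, locally compact length space (Hopf--Rinow), so closed bounded sets are compact; alternatively one notes that the identity map $(G,d_c) \to (G,d)$ is a continuous bijection and $d_c$-balls are compact (the CC-metric is a length metric inducing the manifold topology), and a $d$-ball is contained in a $d_c$-ball of possibly larger radius only after we have the bound we are trying to prove — so the cleanest route is really Hopf--Rinow for $(G,d)$.

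Next I would reduce the general two-point estimate to the one-point-versus-identity estimate by left-invariance. Given $x,y \in G$ with $d(x,y) \le r$, set $z \defeq x^{-1}y$. By left-invariance of $d$ we have $d(1,z) = d(x,y) \le r$, hence $d_c(1,z) \le R_r$ by the definition of $R_r$. Now apply the scaling property and left-invariance of $d_c$: since $s_t$ is a group homomorphism, $s_t(x)^{-1} s_t(y) = s_t(x^{-1}y) = s_t(z)$, so
$$d(s_t(x),s_t(y)) = d(1, s_t(z)) \le d_c(1,s_t(z)) = t\, d_c(1,z) \le t\, R_r.$$
Setting $C_r \defeq R_r / r$ gives $d(s_t(x),s_t(y)) \le t\, C_r\, r$ for all $t>0$ and all such $x,y$, which is the claimed inequality. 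The ``in particular'' clause is then immediate: if $d(x,y) = r$ exactly, the right-hand side $t\, C_r\, r$ equals $t\, C_r\, d(x,y)$.

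The only genuine obstacle is the compactness/finiteness of $R_r$, i.e.\ ruling out that points at bounded Riemannian distance from $1$ can be pushed arbitrarily far in the CC-metric; everything after that is a two-line computation using homogeneity and left-invariance. I would also remark that $C_r$ genuinely depends on $r$ — there is no reason for $R_r/r$ to be bounded as $r \to \infty$ — which is why the statement is phrased with a fixed $r$ and why it is exactly the form needed later (one applies it to simplices of a fixed biLipschitz triangulation, whose diameters are uniformly bounded by some $r$).
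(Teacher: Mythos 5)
Your proof is correct and follows essentially the same route as the paper's: use compactness of the Riemannian $r$-ball together with continuity of $d_c$ to bound $d_c$ on it, then combine $d \le d_c$ with the exact scaling $d_c(s_t(x),s_t(y)) = t\,d_c(x,y)$. The paper works with $B_r(y)$ for an arbitrary fixed $y$ and tacitly uses left-invariance to make the constant $y$-independent, whereas you make the reduction to $y=1$ (and the Hopf--Rinow justification of compactness) explicit, but this is a difference in presentation rather than in substance.
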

\begin{proof}
Let $d_c(\cdot,\cdot)$ be the corresponding Carnot-Carath\'eodory metric. 
We have $x\in B_r(y) \defeq \{z \in G \mid d(z,y)\le r\}$. 
As $B_r(y)$ is compact and $z \mapsto d_c(z,y)$ is continuous, we have 
$$C_r\defeq \max \left\{\frac{d_c(z,y)}{r} \mid z \in B_r(y)\right\} < \infty.$$
This implies
$$d(s_t(x),s_t(y))\le d_c(s_t(x),s_t(y))=t \cdot d_c(x,y)=t \cdot \frac{r}{r} \cdot d_c(x,y)=t \cdot \frac{d_c(x,y)}{r} \cdot r\le t \cdot C_r \cdot r$$
for all $y \in G$ and all $x \in B_r(y)$.
\end{proof}


\section{Filling by approximation in nilpotent groups }\label{FaY}

Young developed in \cite{Young1} a technique for nilpotent Lie groups to construct fillings by approximation. In this section we give a reminder of this construction. In the second half of the section we will introduce a modification of it which allows us to avoid simplices near the base point.\\

A main ingredient for Young's construction is the following approximation theorem. 

\begin{thm}\textup{(Federer-Fleming Deformation Theorem, \cite{FF60})}\\
Let $X$ be a polyhedral complex with finitely many isometry types of cells. Then there is a constant $C_X>0$ depending only on $X$, such that for every Lipschitz $k$-chain $b$, whose boundary $\partial b$ is a polyhedral $(k-1)$-chain, there is a polyhedral $k$-chain $P_X(b)$ and a Lipschitz $(k+1)$-chain $Q_X(b)$ with:
\begin{enumerate}[\quad 1)]
	\item $\mass(P_X(b))\le C_X \cdot \mass(b)$,				\vspace{-2mm}
	\item $\mass(Q_X(b))\le C_X \cdot \mass(b)$,				\vspace{-2mm}
	\item $\partial Q_X(b)=b-P_X(b)$.				
\end{enumerate}
Furthermore, $P_X(b)$ and $Q_X(b)$ are contained in the smallest subcomplex of $X$ which contains $b$.
\end{thm}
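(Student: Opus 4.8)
The plan is to push $b$ into the $k$-skeleton $X^{(k)}$ one skeleton at a time, from the top dimension downwards, using radial projections inside cells, and to control the blow-up of mass by an averaging argument over the choice of projection centre; since $X$ has only finitely many isometry types of cells, every constant produced this way will be uniform.

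First I would isolate the single-cell step. Let $\sigma$ be a $j$-cell of $X$ with $j>k$, realised isometrically in $\RR^j$, and let $c$ be a Lipschitz $k$-chain supported in $\sigma$ with $\partial c$ contained in $\partial\sigma$. For $x$ in the open cell let $\pi_x\colon\sigma\setminus\{x\}\to\partial\sigma$ be radial projection from $x$, and let $h_x$ be the straight-line homotopy from the identity to $\pi_x$; note that $\pi_x$ fixes $\partial\sigma$ pointwise and $h_x$ is stationary there. The Jacobian of $\pi_x$ along a $k$-plane through a point $y$ is, up to a factor bounded in terms of the shape of $\sigma$, comparable to $|y-x|^{-k}$, and because $j>k$ the function $x\mapsto|y-x|^{-k}$ is integrable over $\sigma$ with integral bounded independently of $y\in\sigma$. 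Since $c$ is $k$-dimensional and $k<j$, its image is Lebesgue-null in $\sigma$, so $\pi_x(c)$ and $h_x(c\times[0,1])$ are defined for almost every $x$; integrating the pointwise mass estimates in $x$ gives
$$\int_\sigma \mass(\pi_x(c))\,dx \le C_\sigma\,\mass(c), \qquad \int_\sigma \mass\big(h_x(c\times[0,1])\big)\,dx \le C_\sigma\,\mass(c).$$
By averaging, some single $x$ satisfies both bounds up to a factor $2$; fixing it and setting $P_\sigma(c)\defeq\pi_x(c)$ and $Q_\sigma(c)\defeq h_x(c\times[0,1])$ yields $\partial Q_\sigma(c)=c-P_\sigma(c)$, with $P_\sigma(c)$ and $Q_\sigma(c)$ supported in $\sigma$ and with $\partial c$ left fixed.

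Next I would iterate this over skeleta. Beginning with the chain in $X^{(j)}$ for $j=\dim X$, apply the single-cell step simultaneously inside each open $j$-cell — the open cells are disjoint, so the local constructions patch to a global one and the pieces lying in lower skeleta are untouched — to obtain a chain in $X^{(j-1)}$ together with a homotopy chain of mass at most $C_X$ times the current mass, where $C_X$ is the maximum of the $C_\sigma$ over the finitely many isometry types. Running $j$ from $\dim X$ down to $k+1$ lands the chain in $X^{(k)}$; composing all the homotopies produces one Lipschitz $(k+1)$-chain $Q_X(b)$, and the multiplied mass bounds still depend only on $X$. Since $\partial b$ is a polyhedral $(k-1)$-chain it already sits in $X^{(k-1)}$ and is fixed by every projection, so the boundary is preserved throughout. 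A final rounding step inside the $k$-cells, with mass again controlled by a uniform constant, replaces the resulting Lipschitz $k$-chain in $X^{(k)}$ by a genuine polyhedral chain $P_X(b)$ and enlarges $Q_X(b)$ accordingly. The containment claim is automatic, since every projection and homotopy used inside a cell $\sigma$ stays inside $\sigma$.

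The main obstacle is the averaging estimate of the single-cell step: one must check that the radial-projection Jacobian is genuinely integrable in the centre variable with a bound uniform over the base point $y$ and over the finitely many cell shapes, and that this integrability is precisely what fails when $k$ equals the dimension of the cell — so this is where, and the only place where, the hypothesis $k<j$ enters. Everything else is bookkeeping: keeping the constants uniform and tracking how the successive homotopies compose into a single $Q_X(b)$ with the correct boundary.
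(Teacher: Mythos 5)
The paper does not prove this theorem; it is cited from Federer--Fleming \cite{FF60} and used as a black box, so there is no in-paper argument to compare against. Your sketch reproduces the standard Federer--Fleming deformation argument: radial projection in each top-dimensional cell with the centre chosen by averaging the integral $\int_\sigma |y-x|^{-k}\,dx$ (finite precisely because $k<\dim\sigma$), patching across cells since each projection fixes $\partial\sigma$, iterating down skeleta, and composing the straight-line homotopies into one $(k+1)$-chain $Q_X(b)$. That part is correct, and the observation that $k<j$ is exactly the hypothesis that makes the averaging integral converge is the right thing to emphasise.

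The one place that is genuinely elided is the ``final rounding step inside the $k$-cells.'' Once the chain $c'$ lies in $X^{(k)}$, another radial projection in a $k$-cell $\sigma$ would push $c'|_\sigma$ into $\partial\sigma$, which is $(k-1)$-dimensional and hence carries no $k$-mass; it does not by itself produce a polyhedral chain. What is actually needed is a degree argument: because $\partial(c'|_\sigma)\subset\partial\sigma$, the map $c'|_\sigma$ has a well-defined, locally constant, hence constant degree $m_\sigma$ over the interior of $\sigma$. One sets $P_X(b)|_\sigma=m_\sigma\cdot\sigma$; the mass bound $|m_\sigma|\,\mathrm{vol}(\sigma)\le\mass(c'|_\sigma)$ follows from the area formula, and one last homotopy (radial projection applied to $c'|_\sigma-m_\sigma\sigma$, whose degree is zero) shows $\partial(\text{last homotopy chain})=c'|_\sigma-m_\sigma\sigma$ since the projected image has zero $k$-mass. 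Without introducing the degree, the claim that the final step ``replaces the Lipschitz chain by a polyhedral one'' does not follow from the projection machinery you set up. Everything else, including the containment in the smallest subcomplex meeting $b$, is fine as you argue it.
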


In the following, we start with a recapitulation of the notation and the filling technique from \cite{Young1}. For a Lipschitz map $\varphi: X \to Y$ between to metric spaces $X$ and $Y$, we denote by $\varphi_\#$ the map that sends the Lipschitz chain $a=\sum_i z_i \alpha_i$ in $X$ to the Lipschitz chain $\varphi_\#(a)=\sum_i z_i (\varphi\circ \alpha_i)$ in $Y$. 

Let $G$ always denote an $n$-dimensional, stratified nilpotent Lie group equipped with a left-invariant Riemannian metric and let $\Gamma \le G$ be a (uniform) lattice in $G$ with $s_2(\Gamma) \subset \Gamma$.
Let $(\tau, f:\tau \to G)$ be a biLipschitz triangulation of $G$ and $\phi: \tau \to G$ be a $(k+1)$-horizontal, $\Gamma$-equivariant Lipschitz map in bounded distance $c_\phi> 0$ to $f$. Hence, there is a Lipschitz homotopy $h:G \times [0,1] \to G$ from $\opna{Id}_G$ to $\phi \circ f^{-1}$.

If $a$ is a Lipschitz $k$-cycle in $G$, then $f^{-1}_\#(a)$ is a Lipschitz $k$-cycle in $\tau$ and one can define in $G$ the Lipschitz $k$-cycle
\begin{equation*}\label{defP}
	P_{\phi(\tau)}(a)\defeq \phi_\#\left(P_\tau(f^{-1}_\#(a))\right).
\end{equation*}
\begin{lem}\label{LemQ}\emph{(\cite[Lemma 3.1]{Young1})}\\
There is a constant $c_Q$, only depending on $\tau$, $\phi$ and $k$, such that if\\
	$Q_{\phi(\tau)}(a)\defeq h(a \times [0,1]) + \phi_\#\left(Q_\tau(f^{-1}_\#(a))\right)$
then
\begin{enumerate}[\quad 1)]
\item $\partial Q_{\phi(\tau)}(a)=a - P_{\phi(\tau)}(a)$, \vspace{-2mm}
\item $\mass(Q_{\phi(\tau)}(a)) \le c_Q \cdot \mass(a)$.
\end{enumerate}
\end{lem}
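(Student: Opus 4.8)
The plan is to read off both assertions from the Federer--Fleming deformation theorem together with the standard prism (chain--homotopy) formula; no new geometric input is needed, only careful bookkeeping of boundaries and of Lipschitz constants. Write $c\defeq f^{-1}_\#(a)$. Since $a$ is a Lipschitz $k$-cycle and $f$ is a homeomorphism, $c$ is a Lipschitz $k$-cycle in $\tau$, so $\partial c=0$ is a polyhedral chain and the deformation theorem applies to $c$, producing $P_\tau(c)$ and $Q_\tau(c)$ with $\partial Q_\tau(c)=c-P_\tau(c)$ and $\mass(P_\tau(c)),\,\mass(Q_\tau(c))\le C_\tau\,\mass(c)$. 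Since $\phi_\#$ commutes with $\partial$ and is functorial, pushing the first identity forward gives
$$\partial\big(\phi_\#(Q_\tau(c))\big)=\phi_\#(c)-\phi_\#(P_\tau(c))=(\phi\circ f^{-1})_\#(a)-P_{\phi(\tau)}(a),$$
where the last equality is just the definition $P_{\phi(\tau)}(a)=\phi_\#(P_\tau(f^{-1}_\#(a)))$.

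For the homotopy term, recall that $h$ is a Lipschitz homotopy relating $\opna{Id}_G$ and $\phi\circ f^{-1}$; since $\partial a=0$, the prism formula (with the orientation conventions of \cite{Young1}) yields
$$\partial\big(h(a\times[0,1])\big)=a-(\phi\circ f^{-1})_\#(a)-h(\partial a\times[0,1])=a-(\phi\circ f^{-1})_\#(a).$$
Adding the two displayed equalities, the terms $(\phi\circ f^{-1})_\#(a)$ cancel and one obtains $\partial Q_{\phi(\tau)}(a)=a-P_{\phi(\tau)}(a)$, which is claim 1).

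For claim 2) I would estimate the two summands of $Q_{\phi(\tau)}(a)$ separately. The homotopy $h$ is Lipschitz with a constant independent of $a$ (indeed its $\Gamma$-equivariant construction makes it uniformly Lipschitz), and the prism $a\times[0,1]$ has mass $\mass(a)$ in the product metric, so $\mass\big(h(a\times[0,1])\big)\le\Lip(h)^{k+1}\,\mass(a)$. Likewise $\phi$ and $f^{-1}$ are Lipschitz with constants independent of $a$ — the latter because $(\tau,f)$ is a biLipschitz triangulation — so
$$\mass\big(\phi_\#(Q_\tau(c))\big)\le\Lip(\phi)^{k+1}\,\mass(Q_\tau(c))\le\Lip(\phi)^{k+1}C_\tau\,\mass(c)\le\Lip(\phi)^{k+1}C_\tau\Lip(f^{-1})^{k}\,\mass(a).$$
Taking $c_Q\defeq\Lip(h)^{k+1}+C_\tau\,\Lip(\phi)^{k+1}\Lip(f^{-1})^{k}$, a constant depending only on $\tau$, $\phi$ (which in turn fixes $h$ up to the choice made in the construction) and $k$, gives $\mass(Q_{\phi(\tau)}(a))\le c_Q\,\mass(a)$.

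The two points I would be most careful about are the sign bookkeeping when combining the deformation identity with the prism formula — this is exactly why one fixes a single orientation convention for $a\times[0,1]$, and why the hypothesis that $a$ is a \emph{cycle} is used, so that the side faces $h(\partial a\times[0,1])$ vanish — and the verification that the three Lipschitz constants $\Lip(h),\Lip(\phi),\Lip(f^{-1})$ are finite and, crucially, independent of the cycle $a$ being filled. The latter is where the $\Gamma$-equivariance of $\phi$ (and $h$) enters, via cocompactness of $\Gamma\backslash G$; no quantitative geometry of $G$ beyond that is needed for this lemma, the horizontality of $\phi$ only becoming relevant in the subsequent scaling estimates.
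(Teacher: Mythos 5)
The paper does not give its own proof of this lemma --- it simply cites \cite[Lemma 3.1]{Young1} --- and your argument (Federer--Fleming applied to $f^{-1}_\#(a)$, pushforward under $\phi_\#$ using that $\phi_\#$ commutes with $\partial$, the prism formula for the homotopy $h$, and the vanishing of the side term $h_\#(\partial a\times[0,1])$ because $a$ is a cycle) is precisely the intended standard argument, as is the Lipschitz-constant bookkeeping for the mass bound. The one wrinkle, which you yourself flag, is the sign in the prism formula: under the common convention $\partial(a\times[0,1])=a\times\{1\}-a\times\{0\}-(\partial a)\times[0,1]$ and with $h(\cdot,0)=\opna{Id}_G$, $h(\cdot,1)=\phi\circ f^{-1}$, the homotopy term contributes $(\phi\circ f^{-1})_\#(a)-a$ rather than $a-(\phi\circ f^{-1})_\#(a)$; one therefore either reverses the orientation of $[0,1]$, reverses the direction of $h$, or takes $Q_{\phi(\tau)}(a)=-h_\#(a\times[0,1])+\phi_\#(Q_\tau(f^{-1}_\#(a)))$ to make the cancellation come out as claimed --- this is cosmetic, but worth pinning down explicitly rather than leaving it to ``the conventions of \cite{Young1}.''
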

By defining $P_i(a)\defeq s_{2^i}\left(P_{\phi(\tau)}(s_{2^{-i}}(a))\right)$
one obtains a sequence of rougher and rougher approximations of $a$ and can prove that there is a constant $c_P$, only depending on $\tau$, $\phi$ and $k$, such that $\mass(P_i(a)) \le c_P \cdot \mass(a)$ for all $i \in \NN_0$ (\cite[Lemma 3.2]{Young1}).

In the following crucial step one constructs chains interpolating between two consecutive approximations $P_i$ and $P_{i+1}$. For this, denote for $i\in\{0,1\}$ by $(\tau_i,f_i)$ the triangulation $(\tau, s_{2^i}\circ f)$ of $G$ and let $(\eta, g:\eta \to G\times[0,1])$ be a biLipschitz triangulation of $G\times[0,1]$ such that  the subcomplexes $g^{-1}(G \times \{i\})$, $i \in \{0,1\}$, are isomorphic to $\tau_i$ by isomorphisms $\iota_i$ with $g\circ \iota_i=f_i$. Further let $\psi:\eta \to G$ be a $(k+1)$-horizontal, $s_2(\Gamma)$-equivariant Lipschitz map and define $\phi_i: \tau_i \to G$ by $\phi_i\defeq \psi \circ \iota_i$.
By \cite[Proposition 4.5]{Young1} such a triangulation $(\eta,g)$ always exists, when $(\tau,f)$ is chosen $\Gamma$-equivariant for a  lattice $\Gamma$ of $G$ with $s_2(\Gamma) \subset \Gamma$.

\begin{lem}\emph{(\cite[Lemma 3.3]{Young1})}\\
There is a constant $c_R$, only depending on $\eta$, $\psi$ and $k$, such that if
\begin{equation}\label{defX}
	X(a)\defeq Q_{\tau_1}((f_1^{-1})_\#(a))+g^{-1}_\#(a\times[0,1]) - Q_{\tau_0}((f_0^{-1})_\#(a))
\end{equation}
and $R_{\psi(\eta)}(a)\defeq \psi_\# \left(P_\eta(X(a))\right)$ then
\begin{enumerate}[\quad 1)]
\item $\partial R_{\psi(\eta)}(a)= P_{\phi_1(\tau_1)}(a)-P_{\phi_0(\tau_0)}(a)$, \vspace{-2mm}
\item $\mass (R_{\psi(\eta)}(a)) \le c_R \cdot \mass(a)$.
\end{enumerate}
\end{lem}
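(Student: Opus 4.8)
The plan is to prove both claims by a purely chain-level computation, using the Federer--Fleming Deformation Theorem for the three fixed complexes $\tau_0$, $\tau_1$, $\eta$ together with the telescoping way the signs are arranged in \eqref{defX}. Throughout I would tacitly identify $Q_{\tau_i}\big((f_i^{-1})_\#(a)\big)$ with its image in $\eta$ under the simplicial isomorphism $\iota_i$ (which is biLipschitz, being an isomorphism of complexes with finitely many cell types), and I would use $g\circ\iota_i=f_i$, which rewrites $g^{-1}_\#(a\times\{i\})$ as $(\iota_i)_\#\big((f_i^{-1})_\#(a)\big)$.

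First I would compute $\partial X(a)$. For $i\in\{0,1\}$ the chain $(f_i^{-1})_\#(a)$ is a Lipschitz $k$-cycle in $\tau_i$, so its boundary is $0$ and in particular polyhedral; hence the Deformation Theorem applies and gives $\partial Q_{\tau_i}\big((f_i^{-1})_\#(a)\big)=(f_i^{-1})_\#(a)-P_{\tau_i}\big((f_i^{-1})_\#(a)\big)$. Combining this with $\partial a=0$ and $\partial(a\times[0,1])=\pm\big(a\times\{1\}-a\times\{0\}\big)$ and expanding, the two non-polyhedral contributions $(\iota_i)_\#\big((f_i^{-1})_\#(a)\big)$ cancel by construction of $X(a)$, and with the orientation conventions chosen consistently one is left with the \emph{polyhedral} $k$-chain
$$\partial X(a)=(\iota_1)_\#\Big(P_{\tau_1}\big((f_1^{-1})_\#(a)\big)\Big)-(\iota_0)_\#\Big(P_{\tau_0}\big((f_0^{-1})_\#(a)\big)\Big).$$
Because $\partial X(a)$ is polyhedral, the Deformation Theorem applies to $X(a)$ itself and produces $P_\eta(X(a))$ and $Q_\eta(X(a))$ with $\partial Q_\eta(X(a))=X(a)-P_\eta(X(a))$; applying $\partial$ and using $\partial^2=0$ yields $\partial P_\eta(X(a))=\partial X(a)$. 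Pushing forward by $\psi$ and using $\psi_\#\circ(\iota_i)_\#=(\psi\circ\iota_i)_\#=(\phi_i)_\#$ together with the definition $P_{\phi_i(\tau_i)}(a)=(\phi_i)_\#\big(P_{\tau_i}((f_i^{-1})_\#(a))\big)$, this gives exactly $\partial R_{\psi(\eta)}(a)=P_{\phi_1(\tau_1)}(a)-P_{\phi_0(\tau_0)}(a)$, which is claim 1).

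For claim 2) I would chain the mass estimates: $\mass(R_{\psi(\eta)}(a))\le(\Lip\psi)^{k+1}\,\mass\big(P_\eta(X(a))\big)\le(\Lip\psi)^{k+1}C_\eta\,\mass(X(a))$ by the Deformation Theorem for $\eta$, and then bound $\mass(X(a))$ by the triangle inequality through its three summands, using $\mass(Q_{\tau_i}(b))\le C_{\tau_i}\mass(b)$, the Lipschitz bounds $\mass\big((f_i^{-1})_\#(a)\big)\le(\Lip f_i^{-1})^k\mass(a)$, $\mass\big((\iota_i)_\#(\cdot)\big)\le(\Lip\iota_i)^{k+1}\mass(\cdot)$ and $\mass\big(g^{-1}_\#(a\times[0,1])\big)\le(\Lip g^{-1})^{k+1}\mass(a\times[0,1])$, together with the elementary estimate $\mass(a\times[0,1])\le c_k\mass(a)$ for a dimensional constant $c_k$. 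Since $\tau_0$, $\tau_1$, $\eta$, $\iota_i$, $\psi$ are all fixed, these constants collapse into a single $c_R$ depending only on $\eta$, $\psi$ and $k$.

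The one genuine piece of work is the first step: verifying that the non-polyhedral part of $\partial X(a)$ really cancels. This is exactly where the compatibility of the triangulation $(\eta,g)$ of $G\times[0,1]$ with the triangulations $(\tau_i,f_i)$ of $G$ — encoded in $g\circ\iota_i=f_i$ and the identification $g^{-1}(G\times\{i\})\cong\tau_i$ — is used, and where one must line up the orientation of the prism $a\times[0,1]$ with the signs of the three summands in \eqref{defX}. Once $\partial X(a)$ is known to be polyhedral, both conclusions fall out of the Deformation Theorem with no further effort, so I expect the bookkeeping of identifications and signs to be the only obstacle.
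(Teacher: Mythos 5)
This lemma is quoted from \cite[Lemma 3.3]{Young1} and the paper gives no proof of its own, so there is no in-paper argument to compare against; your reconstruction is correct and follows the natural route that Young takes, as one can also infer from the way the paper later invokes the internal bound $\mass(X(a))\le(2c_\eta\Lip(g^{-1})^k+\Lip(g^{-1})^{k+1})\mass(a)$ from Young's proof. The only cosmetic remark is that your mass accounting carries a few redundant constants: since $\iota_i\circ f_i^{-1}=g^{-1}|_{G\times\{i\}}$, one can replace $\Lip(f_i^{-1})$, $\Lip(\iota_i)$ and the Federer--Fleming constants of $\tau_0,\tau_1$ by $\Lip(g^{-1})$ and $c_\eta$ alone (which is exactly the form the paper quotes), but this is just bookkeeping and changes nothing substantive; likewise, the sign/orientation caveat you flag is real but is a matter of fixing conventions consistently, not a gap.
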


By scaling the chain $R_{\psi(\eta)}$ in the same way as the cycle $P_{\phi(\tau)}$ one gets	$R_i(a)\defeq s_{2^i}\left(R_{\psi(\eta)}(s_{2^{-i}}(a))\right)$ and the following holds for all $i\in \NN_0$ (\cite[Lemma 3.4]{Young1}):
\begin{enumerate}
\item[\textit{1)}] $\partial R_i(a)= P_{i+1}(a)-P_{i}(a)$ \vspace{-2mm}
\item[\textit{2)}] $\mass (R_i(a)) \le c_R \cdot 2^i \cdot \mass(a)$
\end{enumerate}
Using all these constructions, Young shows (\cite[Theorem 5]{Young1}) that for any Lipschitz $k$-cycle $a$ and  $i_o \in \NN$ such that 
$2^{(i_o-1)k} \le c_\Delta \mass(a) \le 2^{i_ok}$ with $c_\Delta \defeq c_\tau \opna{Lip}(f^{-1})^k \frac{1}{\mass(\Delta^{(k)})}$, the Lipschitz $(k+1)$-chain 
\begin{equation*}\label{defb}
	b\defeq - \left(Q_{\phi(\tau)}(a)+ \sum_{i=0}^{i_o-1} R_i(a) \right)
\end{equation*}
is a filling of $a$ with $\mass(b) \precsim \mass(a)^{\frac{k+1}{k}}$.

Furthermore, the same construction yields upper bounds on the filling functions if one replaces the horizontality condition on the map $\psi$ by a bound on the scaling behaviour of $\psi$-images of simplices (compare \cite[Section 5]{Young1}). For this let $f:\RR^+\to \RR^+$ be a function so that for every $(d+1)$-simplex $\Delta \in \eta^{(d+1)}$ one has $\mass(s_t(\psi(\Delta))) \le f(t)$. For every Lipschitz $d$-cycle $a$ in $G$ we have $\mass(P_\eta(X(a))) \le c_\tau \left( 2 c_\tau \opna{Lip}(g^{-1})^d + \opna{Lip}(g^{-1})^{d+1} \right) \mass(a)$ and therefore $P_\eta(X(a))$ consists of not more than $\frac{c_\tau \left( 2 c_\tau \opna{Lip}(g^{-1})^d + \opna{Lip}(g^{-1})^{d+1} \right)}{\mass(\Delta^{(d+1)})} \mass(a)$ many $(d+1)$-simplices. For this reason and with $c_X \defeq \frac{c_\tau \left( 2 c_\tau \opna{Lip}(g^{-1})^d + \opna{Lip}(g^{-1})^{d+1} \right)}{\mass(\Delta^{(d+1)})}$ one has
\begin{equation*}
\mass(R_i(a)) = \mass(s_{2^i}\big(\psi_\#(P_\eta(X(s_{2^{-i}}(a))))\big)) \le f(2^i)c_X \mass(s_{2^{-i}}(a))\le 2^{-di}c_Xf(2^i) \mass(a).
\end{equation*}
If there is a $D\in \NN$ such that one has the above situation with a function $f(t) \sim t^D$, this leads to a filling $b\defeq -Q_{\phi(\tau)}(a)- \sum_{i=0}^{i_o-1} R_i(a)$ with $\mass(b) \precsim \mass(a)^\frac{D}{d}$.\\


We would like to use these constructions to fill $r$-avoidant cycles by $\rho r$-avoidant chains for some $\rho \in (0,1]$. Unfortunately, one can't exclude the occurrence of chains $R_i$ containing the base point $1\in G$ or getting arbitrarily near to it.

To fill $r$-avoidant cycles, we modify this construction to get a control on the distance of the chains $R_i$ to the neutral element $1\in G$ of the group $G$. To do this, we replace simplices too near to $1$ by chains in larger distance.\\

Let $\varepsilon >0$ and let $\{w_1,...,w_m\}$ be the set of all $(n+1)$-simplices of $\eta$ which are simplices  $\Delta^{(n+1)}$ or share a vertex with a simplex $\Delta^{(n+1)}$ with 
$$\psi(\Delta^{(n+1)})\cap B_\varepsilon(1) \ne \emptyset.$$
Let $\{v_1,...,v_{m_\tau}\}$ be the set of all simplices of $\tau$ which have, as simplices of $\tau_0$ or $\tau_1$, non-empty intersection with one of the simplices of $\{w_1,...,w_m\}$. Both of these sets are finite as $\psi$ is $s_2(\Gamma)$-equivariant. As $\tau$ is homeomorphic to $\RR^n$, we find a finite subcomplex $V_{\tau,\varepsilon}$ of it which contains all the simplices $v_i$ and is homeomorphic to an $n$-ball and therefore has an $(n-2)$-connected boundary.

As the first step for our modification, we choose a vertex $x_o \in (\partial V_{\tau,\varepsilon})^{(0)}$ and define the map
$$\pi_0: V_{\tau,\varepsilon}^{(0)} \to (\partial V_{\tau,\varepsilon})^{(0)},\ x \mapsto \pi_0(x)$$
with
\begin{enumerate}[\quad (1)]
\item $\pi_0(x)=x \quad \,\ \forall x \in (\partial V_{\tau,\varepsilon})^{(0)}$,
\item $\pi_0(x)=x_o \quad \forall x \in V_{\tau,\varepsilon}^{(0)} \setminus (\partial V_{\tau,\varepsilon})^{(0)}$.
\end{enumerate}
We denote by $C^{(l)}(T),\ T\subset \tau$, the integral $l$-chains in the subcomplex $T$. Then we inductively define for $1\le l \le n-1$ the maps
$$\pi_l: V_{\tau,\varepsilon}^{(l)} \to C^{(l)}(\partial V_{\tau,\varepsilon}),\ y \mapsto \pi_l(y)$$
with
\begin{enumerate}[\quad ($\pi_l$1)]
\item $\pi_l(y)=y \quad \,\ \forall y \in (\partial V_{\tau,\varepsilon})^{(l)}$,
\item if $\partial y=\sum_{j=0}^l (-1)^j y_j$, so is $\partial \pi_l(y)=\sum_{j=0}^l (-1)^j \pi_{l-1}(y_j)$.
\end{enumerate}
We can construct such maps by filling the boundaries of the mapped simplices by $l$-chains in $\partial V_{\tau,\varepsilon}$. This is possible as $\partial V_{\tau,\varepsilon}$ is $(n-2)$-connected and $l\le n-1$.

Each of the skeletons $V_{\tau,\varepsilon}^{(l)}$ is finite because $V_{\tau,\varepsilon}$ is finite. For this reason, the maximum
$$M_l \defeq \max \left\{  \frac{\mass(\pi_l(y))}{\mass(\Delta^{(l)})} \mid y \in V_{\tau,\varepsilon}^{(l)} \right\} < \infty$$
exists for all $l \in \{0,...,n-1\}$.

Using property $(\pi_l1)$, we can extend the maps $\pi_l$ to the whole $l$-skeleton of $\tau$ by the identity. By property $(\pi_l2)$, we can extend this map to arbitrary $l$-chains:
$$\widetilde{\pi_l}: C^{(l)}(\tau) \to C^{(l)}((\tau \setminus V_{\tau,\varepsilon}) \cup \partial V_{\tau,\varepsilon}),\ b=\sum_{j} \beta_j y_j \mapsto \widetilde{\pi_l}(b) \defeq \sum_j \beta_j \pi_l(y_j).$$
As no $l$-simplex is mapped to a $l$-chain of mass more than $M_l \cdot \mass(\Delta^{(l)})$, we get
$$\mass(\widetilde{\pi_l}(b)) \le M_l \cdot \mass(b) \quad \forall b \in C^{(l)}(\tau) \ \forall l \in \{0,...,n-1\}$$
We defined the maps $\widetilde{\pi_l}$ on the $l$-chains of the simplicial complex $\tau$. As $\tau_0$ and $\tau_1$ are isomorphic to $\tau$, we can consider the maps $\widetilde{\pi_l}$ as maps on $l$-chains of these subcomplexes of $\eta$, too.

As next step, we define similar maps on the skeletons of the simplicial complex $\eta$. For this let $W_{\eta,\varepsilon}$ be a finite subcomplex of $\eta$ which contains all the simplices $w_j, \ j \in \{1,...,m\}$, is homeomorphic to an $(n+1)$-ball and such that $W_{\eta,\varepsilon} \cap \tau_i = V_{\tau_i,\varepsilon},\ i \in \{0,1\}$, where $V_{\tau_i,\varepsilon}$ is the copy of $V_{\tau,\varepsilon}$ in $\tau_i$.
Then the subcomplexes $V_{\tau_i,\varepsilon},\ i \in \{0,1\}$, are contained in the boundary of $W_{\eta,\varepsilon}$. We denote $\widetilde W \defeq \big(\partial W_{\eta,\varepsilon} \setminus (V_{\tau_0,\varepsilon} \cup V_{\tau_1,\varepsilon})\big)\cup (\partial V_{\tau_0,\varepsilon} \cup \partial V_{\tau_1,\varepsilon})$ and choose a vertex $x_1 \in \widetilde W^{(0)}$. Then we define
$$p_0: W_{\eta,\varepsilon}^{(0)} \to \widetilde W^{(0)},\ x \mapsto p_0(x)$$
with 
\begin{enumerate}[\quad (1)]
\item $p_0(x)=\pi_0(x) 	\quad 		\forall x \in V_{\tau_0,\varepsilon}^{(0)} \cup V_{\tau_1,\varepsilon}^{(0)}$,
\item $p_0(x)=x \hspace{9.75mm} 	\forall x \in (\partial W_{\eta,\varepsilon})^{(0)} \setminus (V_{\tau_0,\varepsilon}^{(0)} \cup V_{\tau_1,\varepsilon}^{(0)})$,
\item $p_0(x)=x_1 \hspace{8.25mm}	\forall x \in W_{\eta,\varepsilon}	{(0)} \setminus (\partial W_{\eta,\varepsilon})^{(0)}$.
\end{enumerate}
We denote by $C^{(l)}(E),\ E\subset \eta$, the integral $l$-chains in the subcomplex $E$. Then we inductively define for $1\le l \le n-1$ the maps
$$p_l: W_{\eta,\varepsilon}^{(l)} \to C^{(l)}(\widetilde W), y \mapsto p_l(y)$$
with
\begin{enumerate}[\quad ($p_l$1)]
\item $p_l(y)=\pi_l(y) \quad 			\forall y \in V_{\tau_0,\varepsilon}^{(l)} \cup V_{\tau_1,\varepsilon}^{(l)}$,
\item $p_l(y)=y \hspace{9.25mm}	\forall y \in (\partial W_{\eta,\varepsilon})^{(l)} \setminus (V_{\tau_0,\varepsilon}^{(l)} \cup V_{\tau_1,\varepsilon}^{(l)})$,
\item if $\partial y = \sum_{j=0}^l (-1)^jy_j$, so is $\partial p_l(y)=\sum_{j=0}^l (-1)^jp_{l-1}(y_j)$.
\end{enumerate}
We are able to construct such maps by filling the boundaries of the mapped simplices by $l$-chains in $\widetilde W$. This is possible as $\widetilde W$ is homeomorphic to an $n$-dimensional cylinder and therefore $(n-2)$-connected and because $\pi_l$ satisfies property $(p_l3)$ by property $(\pi_l2)$.

As every skeleton $W_{\eta,\varepsilon}^{(l)}$ is finite, the maximum
$$M'_l \defeq \max \left\{\frac{\mass(p_l(y))}{\mass(\Delta^{(k)})} \mid y \in W_{\eta,\varepsilon}^{(l)}\right\} < \infty$$
exists. 

By property $(p_l2)$ and by property $(p_l1)$, combined with property $(\pi_l1)$, we can extend the maps $p_l$ to the whole $l$-skeleton of $\eta$ by the identity. Using further property $(p_l3)$, we can extend them to maps
$$\widetilde{p_l}: C^{(l)}(\eta) \to C^{(l)}((\eta \setminus W_{\eta,\varepsilon})\cup \widetilde W), \ b=\sum_j \beta_j y_j \mapsto \widetilde{p_l}(b) \defeq \sum_j \beta_j p_l(y_j)$$
of $l$-chains.

Again, no $l$-simplex is mapped to an $l$-chain of mass more than $M'_l \cdot \mass(\Delta^{(l)})$ and so we obtain
$$\mass(\widetilde{p_l}(b)) \le M'_l \cdot \mass(b) \quad \forall b \in C^{(l)}(\eta)\ \forall l \in\{0,...,n-1\}.$$

We use the maps $\widetilde{p_l}$ to adapt the chains $P_i$ and $R_i$ (compare above) for our purpose to fill avoidant cycles with avoidant chains. Let $r>0$ and let $a$ be an $r$-avoidant Lipschitz $k$-cycle in $G$. We define
\begin{equation*}
\widetilde{P_{\phi(\tau)}}(a)\defeq \phi_\#\left( \widetilde{p_k} (P_\tau (f_\#^{-1}(a)))\right)
\end{equation*}
and
\begin{equation}\label{Ptildei}
\widetilde{P_i}(a) \defeq s_{2^i}(\widetilde{P_{\phi(\tau)}}(s_{2^{-i}}(a)))
\end{equation}
as well as
\begin{equation}\label{Rtilde}
\widetilde{R_{\psi(\eta)}}(a) \defeq \psi_\#\left(\widetilde{p_{k+1}}(P_\eta(X(a))) \right) 
\end{equation}
and
\begin{equation}\label{Rtildei}
\widetilde{R_i}(a) \defeq s_{2^i}(\widetilde{R_{\psi(\eta)}}(s_{2^{-i}}(a))).
\end{equation}

\begin{lem}\label{LemRtilde}\quad\\
For the above defined chains holds
$$\partial \widetilde{R_i}(a)=\widetilde{P_{i+1}}(a) -\widetilde{P_{i}}(a) .$$
Further there is a constant $c_{\widetilde{R}}$, only depending on the triangulation $(\eta,g)$, the map $\psi$ and the map $\widetilde{p_{k+1}}$, such that for all $k$-cycles $a$ as above 
$$\mass(\widetilde{R_i}(a))\le 2^i c_{\widetilde{R}}\mass(a).$$
\end{lem}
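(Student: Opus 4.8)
The plan is to re-run Young's construction of the chains $R_i$ (\cite[Lemmas~3.3--3.4]{Young1}) with the chain maps $\widetilde{p_l}$ of this section inserted at the appropriate spots, and to track the finite factors $M'_l$ that they introduce.

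\emph{The boundary identity.} Since $\partial$ commutes with pushforwards under Lipschitz maps and with the dilations $s_{2^i}$, it suffices to establish $\partial\widetilde{R_{\psi(\eta)}}(a)=\widetilde{P_{\phi_1(\tau_1)}}(a)-\widetilde{P_{\phi_0(\tau_0)}}(a)$ --- where, in analogy with $\widetilde{P_{\phi(\tau)}}$, one sets $\widetilde{P_{\phi_j(\tau_j)}}(a)\defeq(\phi_j)_\#(\widetilde{\pi_k}(P_{\tau_j}((f_j^{-1})_\#(a))))$ --- and then to combine it with the scaling relations $\widetilde{P_i}(a)=s_{2^i}(\widetilde{P_{\phi_0(\tau_0)}}(s_{2^{-i}}(a)))$ and $\widetilde{P_{i+1}}(a)=s_{2^i}(\widetilde{P_{\phi_1(\tau_1)}}(s_{2^{-i}}(a)))$. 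The latter two I would read off from $f_j=s_{2^j}\circ f$, from $\phi_j=\psi\circ\iota_j=s_{2^j}\circ\phi$ (the $(k+1)$-approximability relation), and from the fact that $P_{\tau_0}=P_{\tau_1}$ and $\widetilde{\pi_k}$ are operators on the common abstract complex $\tau$ of $\tau_0$ and $\tau_1$. For the identity itself I would start from \eqref{Rtilde} and use that $(p_l3)$ extends linearly to $\partial\circ\widetilde{p_{k+1}}=\widetilde{p_k}\circ\partial$ on chains, which rewrites the left-hand side as $\psi_\#(\widetilde{p_k}(\partial P_\eta(X(a))))$. As in Young's computation, the Lipschitz summands of $\partial X(a)$ cancel because $a$ is a cycle and $g^{-1}(G\times\{j\})\cong\tau_j$, leaving the polyhedral chain $\partial X(a)=P_{\tau_1}((f_1^{-1})_\#(a))-P_{\tau_0}((f_0^{-1})_\#(a))$; the Federer--Fleming deformation theorem applied to $X(a)$ in $\eta$ then gives $\partial P_\eta(X(a))=\partial X(a)$. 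Finally, $\widetilde{p_k}$ restricted to $k$-chains supported in $\tau_j$ equals $\widetilde{\pi_k}$ (by $(p_l1)$ together with $(\pi_l1)$) and $\psi|_{\tau_j}=\phi_j$, so pushing forward yields exactly $\widetilde{P_{\phi_1(\tau_1)}}(a)-\widetilde{P_{\phi_0(\tau_0)}}(a)$.

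\emph{The mass estimate.} This is the bound on $\mass(R_i(a))$ from \cite[Lemma~3.4]{Young1} with one additional factor. I would first recall, as in the discussion preceding this lemma, that $\mass(P_\eta(X(b)))\le c\,\mass(b)$ for every Lipschitz $k$-cycle $b$ with $c$ depending only on $(\eta,g)$, and that $\mass(s_{2^{-i}}(a))\le 2^{-ik}\mass(a)$. Inserting $\widetilde{p_{k+1}}$ multiplies the mass of $P_\eta(X(b))$ by at most $M'_{k+1}$, so $\widetilde{p_{k+1}}(P_\eta(X(b)))$ is a polyhedral $(k+1)$-chain consisting of at most $\frac{M'_{k+1}c}{\mass(\Delta^{(k+1)})}\mass(b)$ simplices of $\eta$. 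Since $\psi$ is $(k+1)$-horizontal, $s_{2^i}$ multiplies the $(k+1)$-volume of each $\psi(\Delta^{(k+1)})$ by $2^{i(k+1)}$, and $\mass(\psi(\Delta^{(k+1)}))\le m_\psi$ by $s_2(\Gamma)$-equivariance of $\psi$; hence, with $b=s_{2^{-i}}(a)$,
\[\mass(\widetilde{R_i}(a))=\mass\big(s_{2^i}(\psi_\#(\widetilde{p_{k+1}}(P_\eta(X(b)))))\big)\le\frac{M'_{k+1}\,c\,m_\psi}{\mass(\Delta^{(k+1)})}\,2^{i(k+1)}\,\mass(b)\le c_{\widetilde{R}}\,2^i\,\mass(a),\]
with $c_{\widetilde{R}}\defeq\frac{M'_{k+1}\,c\,m_\psi}{\mass(\Delta^{(k+1)})}$, which depends only on $(\eta,g)$, $\psi$ and (through $M'_{k+1}$) on $\widetilde{p_{k+1}}$.

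\emph{Main obstacle.} Conceptually nothing here goes beyond \cite{Young1}: the only new input is that $\widetilde{p_{k+1}}$ is a genuine chain map intertwining $\partial$ and that it changes mass by the finite amount $M'_{k+1}$ --- both arranged in the construction of the $p_l$. The step that needs the most care is the sign bookkeeping in the telescoping formula for $\partial X(a)$ together with the identification $\widetilde{p_k}|_{\tau_j}=\widetilde{\pi_k}$, since $\tau_0$ and $\tau_1$ are realised inside $\eta$ only via the isomorphisms $\iota_j$.
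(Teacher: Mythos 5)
Your proposal is correct and follows essentially the same line of argument as the paper: for the boundary identity you commute $\partial$ through $\widetilde{p_{k+1}}$ via $(p_l3)$, reduce to Young's formula $\partial P_\eta(X(a))=\partial X(a)=P_{\tau_1}((f_1^{-1})_\#a)-P_{\tau_0}((f_0^{-1})_\#a)$, identify $\widetilde{p_k}|_{\tau_j}=\widetilde{\pi_k}$ and $\psi\circ\iota_j=\phi_j$, and then scale; for the mass you insert the factor $M'_{k+1}$ into Young's estimate and use $(k+1)$-horizontality to pick up $2^{i(k+1)}$ against $2^{-ik}$. The only cosmetic deviation is that you bound $\mass(\psi_\#(\cdot))$ by counting simplices and using an equivariance bound $m_\psi$, whereas the paper directly uses $\opna{Lip}(\psi)^{k+1}$; both give a constant $c_{\widetilde R}$ with the stated dependencies.
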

\begin{proof}
By equation (\ref{defX}) we have $\partial X(a)= P_{\tau_1}(f_\#^{-1}(a)) - P_{\tau_0}(f_\#^{-1}(a))$. Therefore
\begin{align*}
\partial \widetilde{R_{\psi(\eta)}}(a)=& \psi_\#(\partial \widetilde{p_{k+1}}(P_\eta(X(a))) \stackrel{(p_{k+1}3)}{=}\psi_\#\big(\widetilde{p_{k}}(\partial P_\eta(X(a)))\big)=\psi_\#\big(\widetilde{p_{k}}(P_{\tau_1}(f_\#^{-1}(a)) - P_{\tau_0}(f_\#^{-1}(a)))\big)\\
=& \widetilde{P_1}(a)- \widetilde{P_0}(a)
\end{align*}
So we obtain
\begin{align*}
\partial \widetilde{R_i}(a)\stackrel{(\ref{Rtildei})}{=}&s_{2^i}(\widetilde{\partial R_{\psi(\eta)}}(s_{2^{-i}}(a)))=s_{2^i}( \widetilde{P_1}(s_{2^{-i}}(a))- \widetilde{P_0}(s_{2^{-i}}(a))) \\
\stackrel{(\ref{Ptildei})}{=}&s_{2^i}( s_2(\widetilde{P_0}(s_{2^{-1}}(s_{2^{-i}}(a)))))- s_{2^i}(\widetilde{P_0}(s_{2^{-i}}(a)))\stackrel{(\ref{Ptildei})}{=} \widetilde{P_{i+1}}(a)-\widetilde{P_{i}}(a)
\end{align*}
For the estimate of the mass we use $\mass(X(a)) \le \left(2c_\eta \cdot \opna{Lip}(g^{-1})^k +  \opna{Lip}(g^{-1})^{k+1} \right) \mass(a)$ from the proof of \cite[Lemma 3.3]{Young1}, where $c_\eta$ denotes the constant in the Federer-Fleming Deformation Theorem for the complex $\eta$. With this we get
\begin{align*}
\mass(\widetilde{R_{\psi(\eta)}}(a)) &\le \opna{Lip}(\psi)^{k+1}\mass(\widetilde{p_{k+1}}(P_\eta(X(a)))\\
&\le \opna{Lip}(\psi)^{k+1} M'_{k+1} \mass(P_\eta(X(a)))\\
&\le \opna{Lip}(\psi)^{k+1} M'_{k+1}c_\eta \mass(X(a)) \\
&\le \opna{Lip}(\psi)^{k+1} M'_{k+1}c_\eta\left(2c_\eta \cdot \opna{Lip}(g^{-1})^k +  \opna{Lip}(g^{-1})^{k+1} \right) \mass(a)\ .
\end{align*}
As the chain $\widetilde{R_{\psi(\eta)}}(a)$ is horizontal, it follows with 
$$c_{\widetilde{R}}\defeq\opna{Lip}(\psi)^{k+1} M'_{k+1}c_\eta\left(2c_\eta \cdot \opna{Lip}(g^{-1})^k +  \opna{Lip}(g^{-1})^{k+1} \right)$$
that
$$\mass(\widetilde{R_i}(a)) = 2^{i(k+1)}\mass(\widetilde{R_{\psi(\eta)}}(s_{2^{-i}}(a))) \le 2^{i(k+1)} c_{\widetilde{R}} 2^{-ik} \mass(a) = 2^ic_{\widetilde{R}} \mass(a). $$
\end{proof}

We close our modification by constructing a filling of a cycle $a$ consisting of the old chain $Q_{\phi(\tau)}(a)$ and the new chains $\widetilde{R_i}(a)$. Let $r>\opna{Lip}(\psi)(\opna{diam}(W_{\eta, \varepsilon})+ \opna{diam}(\Delta^{(n+1)}))>0$, so that for every $r$-avoidant $k$-cycle $a$ in $G$ the preimage $f_\#^{-1}(a)$ does not intersect $V_{\tau,\varepsilon}$. Let $a$ be an $r$-avoidant $k$-cycle. Therefore $P_\tau(f_\#^{-1}(a))$ uses no simplices of $V_{\tau,\varepsilon} \setminus \partial V_{\tau,\varepsilon}$ as it is contained in the smallest subcomplex of $\tau$ that contains $f_\#^{-1}(a)$. In particular, we have $\widetilde{P_0(a)}=P_0(a)$. 

\begin{prop}\label{Propb}\quad\\
Let $a$ be as above, $c_\Delta \defeq c_\tau \opna{Lip}(f^{-1})^k \frac{1}{\mass(\Delta^{(k)})}$ and $i_o \in \NN$ such that 
$2^{(i_o-1)k} \le c_\Delta \mass(a) \le 2^{i_ok}$. Then the $(k+1)$-chain
$$\widetilde{b} \defeq -\left(Q_{\phi(\tau)}(a) + \sum_{i=0}^{i_o-1} \widetilde{R_{i}}(a)\right)$$
is a filling of $a$ with $\mass(\widetilde{b}) \precsim \mass(a)^\frac{k+1}{k}$.
\end{prop}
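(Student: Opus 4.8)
The plan is to mimic Young's argument for the (unmodified) filling chain $b = -\big(Q_{\phi(\tau)}(a) + \sum_{i=0}^{i_o-1} R_i(a)\big)$, replacing the chains $R_i$ by $\widetilde{R_i}$ and using Lemma \ref{LemRtilde} in place of the corresponding mass and boundary estimates for $R_i$. First I would check that $\widetilde{b}$ fills $a$. By Lemma \ref{LemQ} we have $\partial Q_{\phi(\tau)}(a) = a - P_{\phi(\tau)}(a) = a - P_0(a)$, and by the remark just before the proposition, for $r$ large enough (so that $f_\#^{-1}(a)$ misses $V_{\tau,\varepsilon}$) one has $P_0(a) = \widetilde{P_0}(a)$. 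The first part of Lemma \ref{LemRtilde} gives $\partial\big(\sum_{i=0}^{i_o-1}\widetilde{R_i}(a)\big) = \widetilde{P_{i_o}}(a) - \widetilde{P_0}(a)$, so
$$\partial \widetilde{b} = -\big(a - \widetilde{P_0}(a)\big) - \big(\widetilde{P_{i_o}}(a) - \widetilde{P_0}(a)\big) = -a + \widetilde{P_{i_o}}(a).$$
Hence it remains to see that $\widetilde{P_{i_o}}(a) = 0$ for the chosen $i_o$. This follows exactly as in \cite[Theorem 5]{Young1}: the choice $2^{(i_o-1)k} \le c_\Delta\mass(a) \le 2^{i_ok}$ with $c_\Delta = c_\tau\opna{Lip}(f^{-1})^k/\mass(\Delta^{(k)})$ forces $P_\tau(f_\#^{-1}(s_{2^{-i_o}}(a)))$ to consist of strictly fewer than one $k$-simplex, i.e. to be the zero chain; since $\widetilde{p_k}$ is linear and sends $0$ to $0$, also $\widetilde{P_{i_o}}(a) = s_{2^{i_o}}\phi_\#(\widetilde{p_k}(P_\tau(f_\#^{-1}(s_{2^{-i_o}}(a))))) = 0$.

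Next I would estimate the mass. We have $\mass(Q_{\phi(\tau)}(a)) \le c_Q\mass(a)$ by Lemma \ref{LemQ}, and $\mass(\widetilde{R_i}(a)) \le 2^i c_{\widetilde R}\mass(a)$ by Lemma \ref{LemRtilde}. Summing the geometric series,
$$\mass(\widetilde{b}) \le c_Q\mass(a) + c_{\widetilde R}\mass(a)\sum_{i=0}^{i_o-1}2^i \le c_Q\mass(a) + c_{\widetilde R}\mass(a)\cdot 2^{i_o}.$$
From $c_\Delta\mass(a) \le 2^{i_ok}$ we get $2^{i_o} \le (c_\Delta\mass(a))^{1/k}\cdot 2 = 2 c_\Delta^{1/k}\mass(a)^{1/k}$ (using $2^{(i_o-1)k}\le c_\Delta\mass(a)$ to bound $i_o$ from above), so $\mass(a)\cdot 2^{i_o} \precsim \mass(a)^{(k+1)/k}$, which gives $\mass(\widetilde{b}) \precsim \mass(a)^{(k+1)/k}$, using that the linear and constant terms absorbed by $\precsim$ are harmless.

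The only genuinely non-routine point is making sure the hypothesis $r > \opna{Lip}(\psi)(\opna{diam}(W_{\eta,\varepsilon}) + \opna{diam}(\Delta^{(n+1)}))$ — which was imposed just before the statement so that $f_\#^{-1}(a)$ avoids $V_{\tau,\varepsilon}$ — is actually enough to conclude $\widetilde{P_0}(a) = P_0(a)$, and that no hidden use of $\widetilde{P_{i}}$ for $0 < i < i_o$ is needed (indeed the telescoping sum only references $\widetilde{P_0}$ and $\widetilde{P_{i_o}}$, so this causes no trouble). In fact there is nothing to prove here beyond what was recorded before the proposition: since $P_\tau(f_\#^{-1}(a))$ lies in the smallest subcomplex of $\tau$ containing $f_\#^{-1}(a)$, and this subcomplex is disjoint from $V_{\tau,\varepsilon}\setminus\partial V_{\tau,\varepsilon}$, the map $\widetilde{p_k}$ acts as the identity on $P_\tau(f_\#^{-1}(a))$ by property $(\pi_k1)$ together with the identity extension off $V_{\tau,\varepsilon}$; thus $\widetilde{P_0}(a) = P_0(a)$. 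So the proof is essentially bookkeeping: substitute Lemma \ref{LemRtilde} into Young's computation. I would therefore write it as: (i) telescoping of boundaries plus vanishing of $\widetilde{P_{i_o}}(a)$, exactly as in \cite[Theorem 5]{Young1}; (ii) the geometric-series mass bound; (iii) the conversion $2^{i_o} \precsim \mass(a)^{1/k}$ from the choice of $i_o$. No step should present a real obstacle.
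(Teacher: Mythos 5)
Your proposal is correct and follows essentially the same route as the paper's proof: telescoping of boundaries using $\partial \widetilde{R_i}(a)=\widetilde{P_{i+1}}(a)-\widetilde{P_i}(a)$, the identification $\widetilde{P_0}(a)=P_0(a)$ via the remark before the proposition, vanishing of $\widetilde{P_{i_o}}(a)$ imported from Young's proof, and the geometric-series mass bound combined with $2^{i_o}\le 2(c_\Delta\mass(a))^{1/k}$ (the latter comes from $2^{(i_o-1)k}\le c_\Delta\mass(a)$, as you note parenthetically, not from the upper bound as your phrasing first suggests). No substantive difference from the paper.
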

\begin{proof}
By the proof of \cite[Theorem 3]{Young1} we know $P_\tau(s_{2^{-i_o}}(a))=0$ and therefore \\
$\widetilde{P_{i_o}}(a) = s_{2^{i_0}}(\phi_\#(\widetilde{p_{k}}(P_\tau(s_{2^{-i_o}}(a)))))=0$. As $\widetilde{P_0}(a)=P_0(a)$, we have
\begin{align*}
\partial \widetilde{b}=& - \left(\partial Q_{\phi(\tau)}(a) + \sum_{i=0}^{i_o-1} \partial R_{i}(a)\right) =-\left((P_0(a)-a) + \sum_{i=0}^{i_o-1} (\widetilde{P_{i+1}}(a)-\widetilde{P_i}(a))\right)\\
=&-\left((P_0(a)-a) + (\widetilde{P_{i_o}}(a)-\widetilde{P_0}(a))\right) = a.
\end{align*}
With Lemma \ref{LemQ} and Lemma \ref{LemRtilde} we get
\begin{align*}
\mass(\widetilde{b}) \le& c_Q\mass(a) + \sum_{i=0}^{i_0-1}(2^ic_{\widetilde{R}}\mass(a)) \le  c_Q\mass(a) + 2^{i_o}c_{\widetilde{R}}\mass(a)\\
\stackrel{\text{def } i_o}{\le}& (c_Q + 2 c_{\widetilde{R}}(c_\Delta \mass(a))^\frac{1}{k}) \mass(a).
\end{align*}
For $\mass(a) \ge 1$ this implies $\mass(\widetilde{b}) \le (c_Q + 2 c_{\widetilde{R}}(c_\Delta)^\frac{1}{k} ) \mass(a)^\frac{k+1}{k} \precsim \mass(a)^\frac{k+1}{k}$.
\end{proof}


Again, if one replaces the horizontality condition on the map $\psi$ by a bound on the scaling behaviour of $\psi$-images of simplices, one can still bound the mass of the filling $\widetilde b$. 
\begin{lem}\label{Lemf}\quad\\
Let $f:\RR^+\to \RR^+$ be a function so that for every $(d+1)$-simplex $\Delta \in \eta^{(d+1)}$ one has $\mass(s_t(\psi(\Delta))) \le f(t)$. Then there is a constant $\widetilde{c_X}$ such that for every Lipschitz $d$-cycle $a$ in $G$
\begin{equation}
\mass(\widetilde{R_i}(a)) \le 2^{-di}\widetilde{c_X}f(2^i) \mass(a).
\end{equation}
\end{lem}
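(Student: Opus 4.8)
The plan is to mimic exactly the computation in Lemma \ref{LemRtilde}, replacing the horizontality-based bound $\mass(s_t(\psi(\Delta)))\precsim 2^{i(k+1)}$ used there by the hypothesis $\mass(s_t(\psi(\Delta)))\le f(t)$, and to track the combinatorial count of simplices appearing in $\widetilde{p_{d+1}}(P_\eta(X(a)))$ rather than the mass of that chain directly.

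First I would recall, as in the recap before Proposition \ref{Propb}, that for a Lipschitz $d$-cycle $a$ one has $\mass(P_\eta(X(a)))\le c_\eta\mass(X(a))\le c_\eta\bigl(2c_\eta\Lip(g^{-1})^d+\Lip(g^{-1})^{d+1}\bigr)\mass(a)$ from the proof of \cite[Lemma 3.3]{Young1}, so that $P_\eta(X(a))$ — being a polyhedral $(d+1)$-chain — consists of at most $N(a):=c_X\mass(a)$ simplices $\Delta^{(d+1)}$, where $c_X=\frac{c_\eta(2c_\eta\Lip(g^{-1})^d+\Lip(g^{-1})^{d+1})}{\mass(\Delta^{(d+1)})}$ is the same constant as in the discussion preceding the modification. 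Next I would observe that $\widetilde{p_{d+1}}$ replaces each such $(d+1)$-simplex by an integral $(d+1)$-chain of at most $M'_{d+1}$ simplices, each again a copy (under the simplicial structure of $\eta$) of $\Delta^{(d+1)}$; hence $\widetilde{p_{d+1}}(P_\eta(X(a)))$ is an integral combination of at most $M'_{d+1}c_X\mass(a)$ simplices of $\eta$, each of the standard isometry type $\Delta^{(d+1)}$. Applying $\psi_\#$ and the dilation $s_{2^i}$ and using the hypothesis on $\mass(s_{2^i}(\psi(\Delta^{(d+1)})))\le f(2^i)$ simplex by simplex, one gets
\[
\mass(s_{2^i}(\psi_\#(\widetilde{p_{d+1}}(P_\eta(X(b))))))\le M'_{d+1}c_X\, f(2^i)\,\mass(b)
\]
for any Lipschitz $d$-cycle $b$. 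Finally, substituting $b=s_{2^{-i}}(a)$ in the definition (\ref{Rtilde})–(\ref{Rtildei}) of $\widetilde{R_i}(a)$ and using $\mass(s_{2^{-i}}(a))\le 2^{-di}\mass(a)$ (the dilation $s_{2^{-i}}$ scales the $d$-dimensional mass of a horizontal — indeed arbitrary Lipschitz — $d$-cycle by $2^{-di}$, just as in the last display of the proof of Lemma \ref{LemRtilde}) yields
\[
\mass(\widetilde{R_i}(a))=\mass(s_{2^i}(\widetilde{R_{\psi(\eta)}}(s_{2^{-i}}(a))))\le M'_{d+1}c_X\, f(2^i)\,\mass(s_{2^{-i}}(a))\le 2^{-di}\widetilde{c_X}f(2^i)\mass(a),
\]
with $\widetilde{c_X}:=M'_{d+1}c_X$, which is the claimed inequality.

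The only genuine subtlety — and the step I would write most carefully — is the bookkeeping that lets one bound the mass of $\psi_\#(\widetilde{p_{d+1}}(\cdots))$ by a constant times the \emph{number} of simplices times $f(2^i)$, rather than by $\Lip(\psi)^{d+1}$ times a mass (which is what fails once $\psi$ is no longer horizontal): one must use that every simplex occurring in $P_\eta(X(a))$ and in the $\pi$- and $p$-replacement chains is, as an abstract simplex of the finite-type complex $\eta$, isometric to the reference simplex $\Delta^{(d+1)}$, so that the hypothesis $\mass(s_t(\psi(\Delta)))\le f(t)$ applies uniformly to each of them, and the dilations $s_t$ are group homomorphisms so they distribute over the chain. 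Everything else is the verbatim bound from the recap and from Lemma \ref{LemRtilde}, so I expect no real obstacle beyond getting these constants lined up correctly.
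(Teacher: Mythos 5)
Your proposal is correct and essentially reproduces the paper's own proof: both arguments bound the number of $(d+1)$-simplices appearing in $\widetilde{p_{d+1}}(P_\eta(X(\cdot)))$ by a constant times the mass (using the Federer--Fleming mass estimate for $P_\eta(X(\cdot))$ together with the factor $M'_{d+1}$ from $\widetilde{p_{d+1}}$), apply the hypothesis $\mass(s_{2^i}(\psi(\Delta)))\le f(2^i)$ simplex by simplex, and finish with $\mass(s_{2^{-i}}(a))\le 2^{-di}\mass(a)$. The only cosmetic difference is that you count simplices before applying $\widetilde{p_{d+1}}$ and then multiply by $M'_{d+1}$, while the paper bounds $\mass(\widetilde{p_{d+1}}(P_\eta(X(a))))$ first and then divides by $\mass(\Delta^{(d+1)})$; both yield the same constant $\widetilde{c_X}$.
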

\begin{proof}
We have 
$$\mass(\widetilde{p_{d+1}}(P_\eta(X(a)))) \le M'_{d+1}c_\tau \left( 2 c_\tau \opna{Lip}(g^{-1})^d + \opna{Lip}(g^{-1})^{d+1} \right) \mass(a)$$
and therefore $\widetilde{p_{d+1}}(P_\eta(X(a)))$ consists of not more than 
$$ \frac{1}{\mass(\Delta^{(d+1)})}M'_{d+1}c_\tau \left( 2 c_\tau \opna{Lip}(g^{-1})^d + \opna{Lip}(g^{-1})^{d+1} \right) \mass(a)$$
many $(d+1)$-simplices. For this reason and with $\widetilde{c_X} \defeq \frac{M'_{d+1}c_\tau \left( 2 c_\tau \opna{Lip}(g^{-1})^d + \opna{Lip}(g^{-1})^{d+1} \right)}{\mass(\Delta^{(d+1)})}$ we get
\begin{align*}
\mass(\widetilde{R_i}(a)) &= \mass(s_{2^i}\big(\psi_\#(\widetilde{p_{d+1}}(P_\eta(X(s_{2^{-i}}(a)))))\big))\\
&\le f(2^i)\widetilde{c_X} \mass(s_{2^{-i}}(a))\\
&\le 2^{-di}\widetilde{c_X}f(2^i) \mass(a).
\end{align*}
\end{proof}

If one can use a polynomial function $f$ this leads to the following.

\begin{prop}\label{PropFillD}\quad\\
Let $D \in \NN$, $D>d\ge 1$, and $C >0$ be so that for every $(d+1)$-simplex $\Delta \in \eta^{(d+1)}$ one has $\mass(s_t(\psi(\Delta))) \le Ct^D+Ct+C$. Then for every $r$-avoidant Lipschitz $d$-cycle $a$ the Lipschitz $(d+1)$-chain
$$\widetilde{b} \defeq -\left(Q_{\phi(\tau)}(a) + \sum_{i=0}^{i_o-1} \widetilde{R_{i}}(a)\right)$$
is a filling of $a$ with $\mass(\widetilde{b}) \precsim \mass(a)^\frac{D}{d}$.
\end{prop}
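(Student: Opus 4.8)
The plan is to mimic the proof of Proposition \ref{Propb} line by line, replacing the horizontality-based mass estimate for $\widetilde{R_i}(a)$ (Lemma \ref{LemRtilde}) by the scaling-based estimate of Lemma \ref{Lemf} applied with the polynomial $f(t) \defeq Ct^D + Ct + C$, i.e. with $f(t) \sim t^D$ since $D \ge 1$. First I would fix $c_\Delta \defeq c_\tau \opna{Lip}(f^{-1})^d \frac{1}{\mass(\Delta^{(d)})}$ and choose $i_o \in \NN$ with $2^{(i_o-1)d} \le c_\Delta \mass(a) \le 2^{i_od}$, exactly as before; the boundary computation $\partial \widetilde b = a$ goes through verbatim, because it only used the relation $\partial \widetilde{R_i}(a) = \widetilde{P_{i+1}}(a) - \widetilde{P_i}(a)$, the identities $\widetilde{P_0}(a) = P_0(a)$ and $\widetilde{P_{i_o}}(a) = 0$, and $\partial Q_{\phi(\tau)}(a) = P_0(a) - a$ — none of which involve the horizontality of $\psi$.

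The new content is the mass estimate. By Lemma \ref{Lemf} we have $\mass(\widetilde{R_i}(a)) \le 2^{-di}\widetilde{c_X} f(2^i)\mass(a)$, and since $f(2^i) \le 3C\, 2^{iD}$ for all $i \ge 0$ (bounding each of the three terms by $C 2^{iD}$, using $D \ge 1$), we obtain $\mass(\widetilde{R_i}(a)) \le 3C\widetilde{c_X}\, 2^{(D-d)i}\mass(a)$. Summing the geometric series, and using $D > d$ so that the exponent $D-d \ge 1$ is positive,
\begin{align*}
\sum_{i=0}^{i_o-1}\mass(\widetilde{R_i}(a)) &\le 3C\widetilde{c_X}\mass(a)\sum_{i=0}^{i_o-1}2^{(D-d)i} \le 3C\widetilde{c_X}\mass(a)\cdot \frac{2^{(D-d)i_o}}{2^{D-d}-1} \le 3C\widetilde{c_X}\mass(a)\cdot 2^{(D-d)i_o}.
\end{align*}
By the choice of $i_o$ we have $2^{di_o} \le 2^d c_\Delta \mass(a)$, hence $2^{(D-d)i_o} = \big(2^{di_o}\big)^{\frac{D-d}{d}} \le \big(2^d c_\Delta \mass(a)\big)^{\frac{D-d}{d}}$, which is $\precsim \mass(a)^{\frac{D-d}{d}}$. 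Combining this with Lemma \ref{LemQ}'s bound $\mass(Q_{\phi(\tau)}(a)) \le c_Q\mass(a)$ gives, for $\mass(a) \ge 1$,
$$\mass(\widetilde b) \le c_Q\mass(a) + 3C\widetilde{c_X}(2^d c_\Delta)^{\frac{D-d}{d}}\mass(a)^{\frac{D-d}{d}+1} \precsim \mass(a)^{\frac{D}{d}},$$
since $\frac{D-d}{d}+1 = \frac{D}{d}$ and the first term is dominated by the second because $\frac{D}{d} > 1$.

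The one point requiring a little care — and the place I expect the only real friction — is verifying that the modified chains $\widetilde{R_i}(a)$ are genuinely $\rho r$-avoidant for a suitable $\rho$, so that $\widetilde b$ is an admissible filling for the divergence rather than merely a homological filling of the right mass; but in the polynomial regime the $\widetilde{p_{d+1}}$-correction pushes simplices into $\widetilde W \subset \partial W_{\eta,\varepsilon}$, i.e. outside $B_\varepsilon(1)$ at scale $2^i$, exactly as in the horizontal case treated after Proposition \ref{Propb}, so the same neighbourhood bookkeeping (the used simplices stay in a bounded scaled neighbourhood of $a$ and never closer to $1$ than a fixed multiple of $2^i$) applies unchanged. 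Thus the statement follows by the same argument as Proposition \ref{Propb} with the exponent $\frac{k+1}{k}$ replaced throughout by $\frac{D}{d}$.
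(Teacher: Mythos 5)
Your proof is correct and takes essentially the same route as the paper: invoke Lemma \ref{Lemf} with $f(t)=Ct^D+Ct+C$, sum the resulting geometric series using $D-d\ge 1$, and convert $2^{(D-d)i_o}$ into a power of $\mass(a)$ via the choice of $i_o$; your upfront bound $f(2^i)\le 3C\cdot 2^{iD}$ is in fact slightly cleaner than the paper's term-by-term handling of the three summands. Note only that the closing paragraph on $\rho r$-avoidance is not needed for this proposition, which asserts just the mass bound $\mass(\widetilde b)\precsim \mass(a)^{D/d}$; the avoidance properties of $\widetilde b$ are established separately in Lemmas \ref{LemDk}--\ref{LemDQ} and assembled in the proof of the Main Theorem.
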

\begin{proof}
The fact that $\widetilde b$ is a filling of $a$ we have already seen in Proposition \ref{Propb}. 
As we now can use Lemma \ref{Lemf} with $f(t)=Ct^D+Ct+C$ we obtain for the mass
\begin{align*}
\mass(\widetilde{b}) &\le c_Q\mass(a) + \sum_{i=0}^{i_0-1}(2^{-di}(C(2^i)^D+C2^i+C)\widetilde{c_X}\mass(a)) \\
&\le c_Q\mass(a) + \sum_{i=0}^{i_0-1}C((2^{(D-d)i}+2^{(1-d)i}+2^{-di})\widetilde{c_X}\mass(a))\\
&\le c_Q\mass(a) + \sum_{i=0}^{i_0-1}C((2^{(D-d)i}+2\cdot2^{(1-d)i})\widetilde{c_X}\mass(a))\\
&\le c_Q\mass(a) + (2^{(D-d)i_o}+2^{(1-d)i_o})C\widetilde{c_X}\mass(a)\\
&\le (c_Q + 2 C\widetilde{c_X}\left((c_\Delta \mass(a))^\frac{D-d}{d}+(c_\Delta \mass(a))^\frac{1-d}{d}\right)) \mass(a).
\end{align*} 
For $\mass(a) \ge 1$ this yields $\mass(\widetilde b)\le (c_Q + 2 C \widetilde{c_X}((c_\Delta )^\frac{D-d}{d}+ c_\Delta)) \mass(a)^\frac{D}{d} \precsim \mass(a)^\frac{D}{d}$.
\end{proof}


\section{The proof of the main result}\label{S3}

In this section we use all the notations as in section \ref{FaY}. To prove our Main Theorem we will show that in early approximation steps the chains $\widetilde{R_i}(a)$ stay near the cycle $a$ and that in late approximation steps they stay far away from the base point.

By construction we have that the  $(k+1)$-chains $\widetilde{p_{k+1}}(P_\eta(X(a)))$ only use simplices $\Delta$ of $\eta$ with $\psi(\Delta) \cap B_\varepsilon(1)= \emptyset$. 

Set $D_\eta \defeq \opna{diam}( \Delta^{(n+1)})$ and let $r>\opna{Lip}(\psi)(\opna{diam}(W_{\eta, \varepsilon})+ D_\eta)>0$ as before.

\begin{lem}\label{LemDk}\quad\\
Let $a$ be an $r$-avoidant Lipschitz $k$-cycle in $G$. Then holds 
$$d(\widetilde{R_i}(a),1) \ge 2^i \cdot\varepsilon.$$
\end{lem}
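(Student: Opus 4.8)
The plan is to trace the construction of $\widetilde{R_i}(a)$ back through its definition and reduce everything to the single geometric fact that the $(k+1)$-chain $\widetilde{p_{k+1}}(P_\eta(X(\,\cdot\,)))$ only uses simplices $\Delta$ of $\eta$ with $\psi(\Delta)\cap B_\varepsilon(1)=\emptyset$, which we already noted by construction of the maps $p_l$ (they map every simplex meeting $B_\varepsilon(1)$ under $\psi$ — i.e. every $w_j$ and everything touching one — into $\widetilde W$, which by choice avoids those simplices). First I would recall that by \eqref{Rtildei} we have $\widetilde{R_i}(a)=s_{2^i}(\widetilde{R_{\psi(\eta)}}(s_{2^{-i}}(a)))$ and by \eqref{Rtilde} that $\widetilde{R_{\psi(\eta)}}(a')=\psi_\#(\widetilde{p_{k+1}}(P_\eta(X(a'))))$ for any cycle $a'$, in particular for $a'=s_{2^{-i}}(a)$.

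Next I would argue pointwise. Let $z$ be any point in the image of $\widetilde{R_i}(a)$. Then $z=s_{2^i}(w)$ for some $w$ in the image of $\widetilde{R_{\psi(\eta)}}(s_{2^{-i}}(a))=\psi_\#(\widetilde{p_{k+1}}(P_\eta(X(s_{2^{-i}}(a)))))$. Hence $w=\psi(p)$ for some $p$ lying in a simplex $\Delta\in\eta^{(k+1)}$ actually appearing in the chain $\widetilde{p_{k+1}}(P_\eta(X(s_{2^{-i}}(a))))$. By the construction of $\widetilde{p_{k+1}}$ such a $\Delta$ satisfies $\psi(\Delta)\cap B_\varepsilon(1)=\emptyset$, so $d(\psi(p),1)=d(w,1)\ge\varepsilon$; here I should note that $1$ is a fixed point of every dilation $s_t$, so distances to $1$ scale under $s_{2^i}$ exactly the way the metric permits. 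Using the homogeneity $d_c(s_t(x),s_t(y))=t\,d_c(x,y)$ together with $d\le d_c$ — or more directly the observation that $s_{2^i}$ maps $B_\varepsilon(1)$ into $B_{C_\varepsilon 2^i\varepsilon}(1)$ — is the wrong direction; instead I want a lower bound, so I would use that $s_{2^{-i}}$ is the inverse dilation and that if $d(z,1)<2^i\varepsilon$ then $d(s_{2^{-i}}(z),1)<\varepsilon$ would follow from the corresponding contraction estimate, contradicting $d(w,1)\ge\varepsilon$ with $w=s_{2^{-i}}(z)$. Concretely: $d_c$ scales exactly, so $d_c(s_{2^{-i}}(z),1)=2^{-i}d_c(z,1)$, and since $d(z,1)\le d_c(z,1)$ one gets $d_c(s_{2^{-i}}(z),1)\le 2^{-i}d_c(z,1)$; this still isn't quite a clean lower bound on $d(z,1)$.

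The cleanest route, which I would actually write, is to work entirely with the Riemannian distance and the dilation directly: since $1$ is fixed by $s_t$, the ball $B_\varepsilon(1)$ in the Carnot–Carathéodory metric satisfies $s_{2^i}(B^{c}_\varepsilon(1))=B^{c}_{2^i\varepsilon}(1)$ where $B^c$ denotes CC-balls, by exact homogeneity of $d_c$. Every point $w$ in the image of $\widetilde{R_{\psi(\eta)}}(s_{2^{-i}}(a))$ has $d_c(w,1)\ge d(w,1)\ge\varepsilon$ — wait, I need $d_c(w,1)\ge\varepsilon$, which follows because $\psi(\Delta)\cap B^c_\varepsilon(1)$ is even smaller than $\psi(\Delta)\cap B_\varepsilon(1)$; so I should have defined the $w_j$ using CC-balls from the start, but since $d\le d_c$ the Euclidean ball $B_\varepsilon(1)$ contains the CC-ball $B^c_\varepsilon(1)$, giving $\psi(\Delta)\cap B^c_\varepsilon(1)=\emptyset$ hence $d_c(w,1)\ge\varepsilon$ for all such $w$. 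Then $d_c(z,1)=d_c(s_{2^i}(w),1)=2^i d_c(w,1)\ge 2^i\varepsilon$, and finally $d(z,1)$ — hmm, $d\le d_c$ gives the inequality the wrong way again.

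I expect this sign issue to be the one genuine subtlety, and the resolution is that the statement should be read with the understanding (consistent with the rest of the paper) that "avoidant" and these distance bounds are with respect to a metric for which dilations act by exact scaling and for which $d(z,1)=2^i d(w,1)$ holds — or else one absorbs a fixed multiplicative constant $C_\varepsilon$ coming from Lemma \ref{LemSkal} comparing $d$ and $d_c$ on $B_\varepsilon(1)$, which only changes $\varepsilon$ to $\varepsilon/C_\varepsilon$ and is harmless for the $\preccurlyeq$-estimates in the Main Theorem. So the proof I would write is: fix $z$ in the image of $\widetilde{R_i}(a)$, write $z=s_{2^i}(\psi(p))$ with $p\in\Delta$, $\Delta$ a simplex used by $\widetilde{p_{k+1}}(P_\eta(X(s_{2^{-i}}(a))))$, invoke the construction to get $d(\psi(p),1)\ge\varepsilon$, then apply exact homogeneity of the distance to $1$ under $s_{2^i}$ (the fixed point) to conclude $d(z,1)\ge 2^i\varepsilon$; since $z$ was arbitrary, $d(\widetilde{R_i}(a),1)\ge 2^i\varepsilon$. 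The main obstacle is purely bookkeeping: making sure the $\varepsilon$-ball in the definition of the $w_j$ and the scaling behaviour of the metric near the fixed point $1$ are used consistently, and I'd handle it by the constant-absorption remark above.
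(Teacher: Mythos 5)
Your overall strategy is the same as the paper's: note that by construction $\widetilde{p_{k+1}}(P_\eta(X(\cdot)))$ only uses simplices $\Delta$ with $\psi(\Delta)\cap B_\varepsilon(1)=\emptyset$, hence $d(\widetilde{R_{\psi(\eta)}}(s_{2^{-i}}(a)),1)\ge\varepsilon$, and then transfer this lower bound through the dilation $s_{2^i}$. The paper does exactly this in two lines, passing without comment through the inequality $d(s_{2^i}(x),1)\ge 2^i\,d(x,1)$. Where you go wrong is in trying to extract that inequality from the homogeneity of $d_c$ together with $d\le d_c$, notice (correctly) that the comparisons all point the wrong way, and then conclude the lemma only holds after replacing $\varepsilon$ by $\varepsilon/C_\varepsilon$. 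That conclusion is a genuine gap: you do not prove the lemma as stated, and the fix you propose is unnecessary.

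The inequality you need holds exactly and does not route through $d_c$ at all. For $0<t\le 1$ the map $\hat s_t$ acts on $V_j$ by multiplication by $t^j\le t$, so (with the strata chosen orthogonal at $1$, which is the standard convention for a left-invariant metric on a stratified group) $|\hat s_t(v)|\le t\,|v|$ for every $v\in\mf g$. Since $s_t$ is a group automorphism and the metric is left-invariant, the same contraction holds on every tangent space, so $s_t$ shortens the Riemannian length of any curve by a factor at most $t$; hence $d(s_t(x),s_t(y))\le t\,d(x,y)$ for all $x,y$ and all $t\le 1$. Applying this with $t=2^{-i}$, $y=1$ and inverting gives $d(s_{2^i}(x),1)\ge 2^i\,d(x,1)$, which is the missing step. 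Feeding in $d(\widetilde{R_{\psi(\eta)}}(s_{2^{-i}}(a)),1)\ge\varepsilon$ then gives $d(\widetilde{R_i}(a),1)\ge 2^i\varepsilon$ with no extra constant. So the idea is right and coincides with the paper's, but the pivotal scaling estimate is left unproved in your write-up and your attempted derivations of it (via $d\le d_c$) cannot work; the correct argument is the elementary stratum-by-stratum contraction above.
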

\begin{proof}
By the defining equation (\ref{Rtilde}) we have $\widetilde{R_{\psi(\eta)}}(a) \defeq \psi_\#\left(\widetilde{p_{k+1}}(P_\eta(X(a))) \right)$ and therefore \\$d(\widetilde{R_{\psi(\eta)}}(a), 1) > \varepsilon$.
By equation (\ref{Rtildei}) we have $\widetilde{R_i}(a)=s_{2^i}(\widetilde{R_{\psi(\eta)}}(s_{2^{-i}}(a)))$ and so
$d(\widetilde{R_i}(a),1)\ge  2^i  d(\widetilde{R_{\psi(\eta)}}(s_{2^{-i}}(a)),1) \ge 2^i \cdot\varepsilon$.
\end{proof}

\begin{lem}\label{LemDtilde}\quad\\
Let $a$ be an $r$-avoidant Lipschitz $k$-cycle in $G$. Then the chain $\widetilde{R_i}(a)$ is contained in the $(2^i\cdot C_{\widetilde D} \cdot \widetilde D)$-neighbourhood of $a$, where  $\widetilde D$ only depends on the triangulation $(\eta, g)$ and the maps $\phi$ and $\psi$ and the choice of the subcomplex $W_{\eta,\varepsilon}$ and $C_{\widetilde D}$ is the constant from Lemma \ref{LemSkal}.
\end{lem}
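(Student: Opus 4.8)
The plan is to track the displacement caused by each operation that transforms $a$ into $\widetilde{R_i}(a)$ and sum up the contributions, all measured via Lemma \ref{LemSkal}. First I would note that, by the Federer--Fleming Deformation Theorem, both $P_\eta(X(a))$ and the interpolating pieces of $X(a)$ are contained in the smallest subcomplex of $\eta$ containing $X(a)$, which in turn (via $g$, $f_0$, $f_1$, $\iota_i$) sits in a bounded neighbourhood of the $g$-, $f_0$- and $f_1$-preimages of $a$. Because $(\eta,g)$, $(\tau_0,f_0)$, $(\tau_1,f_1)$ are biLipschitz triangulations with finitely many cell types, each simplex used has diameter bounded by a constant depending only on $(\eta,g)$. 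So the preimage complex lies in a $c$-neighbourhood of $g^{-1}(a\times[0,1])$ for a constant $c$ depending only on the triangulation, hence $P_\eta(X(a))$ does too.

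Next I would control the map $\widetilde{p_{k+1}}$: it only alters chains inside the finite subcomplex $W_{\eta,\varepsilon}$, replacing an affected simplex by an $l$-chain supported in $\widetilde W \subset W_{\eta,\varepsilon}$. Since $W_{\eta,\varepsilon}$ is finite and fixed, this moves the support of a chain by at most $\opna{diam}(W_{\eta,\varepsilon})$. Combining the two previous steps, $\widetilde{p_{k+1}}(P_\eta(X(a)))$ is contained in a $(c+\opna{diam}(W_{\eta,\varepsilon}))$-neighbourhood of $g^{-1}(a\times[0,1])$, where the constant depends only on $(\eta,g)$ and the choice of $W_{\eta,\varepsilon}$. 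Applying $g$ (biLipschitz) and then $\psi$ (Lipschitz), and using that $\psi\circ\iota_i = \phi_i$ agrees with $\phi$-type data so that points of $a\times\{t\}$ get mapped within bounded distance of the corresponding point of $a$ — here I would invoke that $\phi$ is in finite distance $c_\phi$ to $f$, and that $f_i$, $\iota_i$, $g$ are compatible — I conclude that $\widetilde{R_{\psi(\eta)}}(a)=\psi_\#(\widetilde{p_{k+1}}(P_\eta(X(a))))$ lies in the $\widetilde D$-neighbourhood of $a$ for a constant $\widetilde D$ depending only on $(\eta,g)$, $\phi$, $\psi$, $c_\phi$ and $W_{\eta,\varepsilon}$.

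Finally I would pass to $\widetilde{R_i}(a)=s_{2^i}(\widetilde{R_{\psi(\eta)}}(s_{2^{-i}}(a)))$. Given a point $x$ of $\widetilde{R_i}(a)$, write $x=s_{2^i}(x')$ with $x'\in\widetilde{R_{\psi(\eta)}}(s_{2^{-i}}(a))$, and pick $y'$ on $s_{2^{-i}}(a)$ with $d(x',y')\le\widetilde D$; then $s_{2^i}(y')$ lies on $a$, and by Lemma \ref{LemSkal} with $r=\widetilde D$ we get $d(x,s_{2^i}(y'))=d(s_{2^i}(x'),s_{2^i}(y'))\le 2^i\cdot C_{\widetilde D}\cdot\widetilde D$. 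This gives the claimed bound.

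The main obstacle I anticipate is the bookkeeping in the second step: making precise that the smallest subcomplex of $\eta$ containing $X(a)$ really does lie in a bounded neighbourhood of $g^{-1}(a\times[0,1])$, since $X(a)$ itself involves the chains $Q_{\tau_i}((f_i^{-1})_\#(a))$ and the homotopy product $g^{-1}_\#(a\times[0,1])$, and one must check that the Federer--Fleming operators $P$, $Q$ do not spread support beyond the containing subcomplex — which is exactly the "furthermore" clause of the Deformation Theorem — and that the isomorphisms $\iota_i$ and the map $g$ are biLipschitz with constants independent of $a$. Once all supports are confined to a neighbourhood of $a$ of size controlled by the fixed combinatorial data, the scaling argument via Lemma \ref{LemSkal} is immediate.
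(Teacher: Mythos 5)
Your proposal is correct and follows essentially the same route as the paper: first bound $\widetilde{R_{\psi(\eta)}}(a)$ within a fixed $\widetilde D$-neighbourhood of $a$ by tracking the support confinement from the Deformation Theorem, the $\opna{diam}(W_{\eta,\varepsilon})$-displacement of $\widetilde{p_{k+1}}$, the Lipschitz constant of $\psi$, and the distance $c_\phi$ between $\phi$ and $f$, and then scale via Lemma \ref{LemSkal} applied to equation (\ref{Rtildei}). The only imprecision is the phrase ``applying $g$ (biLipschitz) and then $\psi$'' --- one applies only $\psi$ to the chain in $\eta$; the comparison point is $\psi_\#((f_0^{-1})_\#(a))=\phi_\#((f^{-1})_\#(a))$ --- but the surrounding reasoning makes clear you have the right picture.
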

\begin{proof}
We prove that $\widetilde{R_{\psi(\eta)}}(a)$ is contained in the $\widetilde D$-neighbourhood of $a$ for $\widetilde D \defeq c_\phi + \Lip(\psi)\cdot(\Lip(g^{-1}) + D_\eta+ \opna{diam}(W_{\eta,\varepsilon}))$. Then the claim follows by Lemma \ref{LemSkal} and equation (\ref{Rtildei}).

Remember the definition $\widetilde{R_{\psi(\eta)}}(a)=\psi_\#(\widetilde{p_{k+1}}(P_\eta(X(a))))$ with 
$$X(a)= Q_{\tau_1}((f_1^{-1})_\#(a))+g^{-1}_\#(a\times[0,1]) - Q_{\tau_0}((f_0^{-1})_\#(a)).$$
As $f_0$ and $f_1$ are restrictions of $g$ and as $Q_{\tau_i}((f_i^{-1})_\#(a))$ is contained in the smallest subcomplex of $\tau_i$ that contains $(f_i^{-1})_\#(a)$, we have that $P_\eta(X(a))$ is contained in the smallest subcomplex of $\eta$ that contains $g^{-1}_\#(a\times[0,1])$. As further $g^{-1}_\#(a\times[0,1])$ lies in the $\Lip(g^{-1})$-neighbourhood of $(f_0^{-1})_\#(a)$, this subcomplex is contained in the $(\Lip(g^{-1})+D_\eta)$-neighbourhood of $(f_0^{-1})_\#(a)$. Further the map $\widetilde{p_{k+1}}$ moves chains at most by $\opna{diam}(W_{\eta,\varepsilon})$.
 Therefore $\widetilde{R_{\psi(\eta)}}(a)$ is contained in the $\big(\Lip(\psi)\cdot(\Lip(g^{-1})+D_\eta+\opna{diam}(W_{\eta,\varepsilon}))\big)$-neighbourhood of $\psi_\#((f_0^{-1})_\#(a))=\phi_\#((f^{-1})_\#(a))$. Finally, $\phi$ and $f$ are in distance at most $c_\phi$ and so $\phi_\#((f^{-1})_\#(a))$ is in the $c_\phi$-neighbourhood of $a$.
\end{proof}

\begin{lem}\label{LemDQ}\quad\\
Let $a$ be a Lipschitz $k$-cycle in $G$. Then there is a constant $D_Q>0$, only depending on the triangulations $(\tau,f)$, $(\eta,g)$, the map $\phi$ and the homotopy $h$, such that $Q_{\phi(\tau)}(a)$ is contained in the $D_Q$-neighbourhood of $a$.
\end{lem}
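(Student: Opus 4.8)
The plan is to trace through the explicit definition of $Q_{\phi(\tau)}(a)$ and bound the displacement contributed by each of its two summands, exactly in the spirit of the proof of Lemma \ref{LemDtilde}. Recall from Lemma \ref{LemQ} that
$$Q_{\phi(\tau)}(a) = h(a \times [0,1]) + \phi_\#\big(Q_\tau(f^{-1}_\#(a))\big).$$
So it suffices to produce a neighbourhood bound for each of the two terms on the right and take $D_Q$ to be (a suitable multiple of) their maximum.

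First I would handle $\phi_\#(Q_\tau(f^{-1}_\#(a)))$. By the Federer--Fleming Deformation Theorem, $Q_\tau(f^{-1}_\#(a))$ is contained in the smallest subcomplex of $\tau$ containing $f^{-1}_\#(a)$, hence lies in the $D_\tau$-neighbourhood of $f^{-1}_\#(a)$, where $D_\tau \defeq \opna{diam}(\Delta^{(k+1)})$ is the maximal diameter of a simplex of $\tau$ (finite since $\tau$ is a biLipschitz triangulation with finitely many isometry types of cells). Applying the $\Lip(\phi)$-Lipschitz map $\phi$, the chain $\phi_\#(Q_\tau(f^{-1}_\#(a)))$ lies in the $\Lip(\phi)\cdot D_\tau$-neighbourhood of $\phi_\#(f^{-1}_\#(a))$, which in turn is in the $c_\phi$-neighbourhood of $f_\#(f^{-1}_\#(a)) = a$ since $\phi$ and $f$ are in bounded distance $c_\phi$. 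Thus this summand is in the $(\Lip(\phi) D_\tau + c_\phi)$-neighbourhood of $a$.

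Second, for $h(a\times[0,1])$: the map $h: G\times[0,1]\to G$ is a Lipschitz homotopy from $\opna{Id}_G$ to $\phi\circ f^{-1}$, so for every point $x$ in the image of $a$ and every $s\in[0,1]$ the point $h(x,s)$ satisfies $d(h(x,s),x)\le d(h(x,s),h(x,0)) \le \Lip(h)$. Hence $h(a\times[0,1])$ is contained in the $\Lip(h)$-neighbourhood of $a$. Setting
$$D_Q \defeq \max\{\Lip(\phi)\cdot D_\tau + c_\phi,\ \Lip(h)\}$$
— a quantity depending only on $(\tau,f)$, $(\eta,g)$, $\phi$ and $h$ — both summands of $Q_{\phi(\tau)}(a)$, and therefore $Q_{\phi(\tau)}(a)$ itself, lie in the $D_Q$-neighbourhood of $a$, which is the claim.

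I do not anticipate a genuine obstacle here; the only point requiring a little care is making sure the relevant constants ($D_\tau$, $\Lip(\phi)$, $\Lip(h)$, $c_\phi$) are all finite and depend only on the stated data — which they are, by compactness of the cells and the standing assumptions on the triangulation and on $\phi$ — and checking that $h$ really is globally Lipschitz rather than merely continuous, which is part of the setup recalled at the start of Section \ref{FaY} where $h$ is introduced as a Lipschitz homotopy.
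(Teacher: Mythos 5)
Your proof is correct and follows essentially the same approach as the paper: decompose $Q_{\phi(\tau)}(a)$ into the homotopy piece and the $\phi_\#Q_\tau$ piece, bound the displacement of the latter via the Federer--Fleming locality property followed by the Lipschitz constant of $\phi$ and the bounded distance $c_\phi$, and bound the former directly by $\Lip(h)$. The only cosmetic differences are that the paper uses the cruder bound $D_\eta$ in place of your $D_\tau$ and defines $D_Q$ as a sum rather than a maximum; both choices are immaterial.
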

\begin{proof}
Remember the definition $Q_{\phi(\tau)}(a)=h_\#(a\times [0,1]) + \phi_\#(Q_\tau((f^{-1})_ \#(a)))$. As $Q_\tau((f^{-1})_ \#(a))$ is contained in the smallest subcomplex of $\tau$ which contains $(f^{-1})_ \#(a)$, and as $\tau$ is a subcomplex of $\eta$, it is contained in the $D_\eta$-neighbourhood of $(f^{-1})_ \#(a)$. So $\phi_\#(Q_\tau((f^{-1})_\#(a)))$ is contained in the $(\Lip(\phi) \cdot D_\eta)$-neighbourhood of $\phi_\#((f^{-1})_ \#(a))$. As $\phi$ and $f$ are in distance at most $c_\phi$, $\phi_\#((f^{-1})_\#(a))$ is in the $c_\phi$-neighbourhood of $a$. We get that $Q_{\phi(\tau)}(a)$ is contained in the $D_Q$-neighbourhood of $a$ for $D_Q\defeq \Lip(h)+c_\phi+ \Lip(\phi)\cdot D_\eta$.
\end{proof}

Now we are prepared for the proof of our Main Theorem.

\begin{proofof}{the Main Theorem}\quad\\
Let $r>0$, $\alpha >0$. By Remark \ref{Remr} we can assume $r> 10 \cdot \max\{\varepsilon,D_Q,C_{\widetilde D}\cdot\widetilde D\}$. Let further 
$$\rho \le \min\left\{\frac{9}{10}\ ,\  \frac{\varepsilon}{\varepsilon+C_{\widetilde D}\cdot \widetilde D}\right\}.$$
We will show that for every $r$-avoidant Lipschitz $k$-cycle $a$ in $G$ with $\mass(a) \le \alpha \cdot r^k$
the filling $\widetilde b=-Q_{\phi(\tau)}(a)- \sum_{i=0}^{i_o-1} \widetilde{R_i}(a)$  is $\rho r$-avoidant.
For this we treat the $Q$-part and the $R$-part separately. 

By the lower bound on $r$ we have in particular $r > 10 \cdot D_Q$ and therefore $D_Q < \frac{1}{10} r$. As $a$ is $r$-avoidant, $Q_{\phi(\tau)}(a)$ is $\frac{9}{10} r$-avoidant by Lemma \ref{LemDQ} and therefore $\rho r$-avoidant.

For the $R$-part we use Lemma \ref{LemDk} and Lemma \ref{LemDtilde}. By these we have for every $i\in \{0,...,i_o-1\}$ that $\widetilde{R_i}(a)$ is simultaneously $(2^i \cdot \varepsilon)$-avoidant and $(r-2^i\cdot C_{\widetilde D}\cdot \widetilde D)$-avoidant. For every single $i$ we can use the bigger of these two numbers, and in the whole we get:
\begin{align*}
d(1,\sum_{i=0}^{i_o-1} \widetilde{R_i}(a)) &\ge \min\left\{ \max\{2^i \cdot \varepsilon\ ,\ r-2^i\cdot C_{\widetilde D}\cdot \widetilde D\} \mid i\in \{0,...,i_o-1\} \right\}\\
&\ge \min\left\{ \max\{2^i \cdot \varepsilon\ ,\ r-2^i\cdot C_{\widetilde D}\cdot \widetilde D\} \mid i\in\NN_0 \right\}\\
&\ge \min\left\{ \max\{2^i \cdot \varepsilon\ ,\ r-2^i\cdot C_{\widetilde D}\cdot \widetilde D\} \mid i\in [0,\infty) \right\}
\end{align*}
We set $M_1(i)\defeq 2^i \cdot \varepsilon$ and $M_2(i)\defeq r-2^i\cdot C_{\widetilde D}\cdot \widetilde D$ for $i \ge 0$. The function $M_1$ is strictly increasing in $i$ and the function $M_2$ is strictly decreasing in $i$. So we find the minimal maximum of $\{M_1(i),M_2(i)\}$ in the point where $M_1$ and $M_2$ coincide. This is for
$$M_1(i)=M_2(i)\ \Leftrightarrow \ 2^i \cdot \varepsilon=r-2^i\cdot C_{\widetilde D}\cdot \widetilde D\ \Leftrightarrow\ i=\log_2\left(\frac{r}{\varepsilon+C_{\widetilde D} \cdot \widetilde D}\right)$$
and hence the minimal maximum is
$$M_1(\log_2\left(\frac{r}{\varepsilon+C_{\widetilde D} \cdot \widetilde D}\right))= 2^{\log_2\left(\frac{r}{\varepsilon+C_{\widetilde D} \cdot \widetilde D}\right)} \cdot \varepsilon=\frac{r}{\varepsilon+C_{\widetilde D} \cdot \widetilde D} \cdot \varepsilon=\frac{\varepsilon}{\varepsilon+C_{\widetilde D} \cdot \widetilde D} \cdot r \ge \rho r\ .$$
Therefore $\sum_{i=0}^{i_o-1} \widetilde{R_i}(a)$ is $\rho r$-avoidant. Altogether, the filling $b$ is $\rho r$-avoidant and satisfies
$$\mass(\widetilde b) \precsim \mass(a)^\frac{k+1}{k} \precsim r^{k+1}.$$

If we start with na $r$-avoidant $(k+1)$-cycle $a'$ with $\mass(a')\le \alpha \cdot r^{k+1}$, the same computations show, that the filling $\widetilde{b'}=-Q_{\phi(\tau)}(a')- \sum_{i=0}^{i'_o-1} R_i(a')$ is $\rho r$-avoidant. 
As we have $\mass(s_{t}(\psi(\Delta))) \precsim t^{K+2}$ for all $\Delta \in \eta^{(k+2)}$ the filling $\widetilde{b'}$ satisfies by Proposition \ref{PropFillD}
$$\mass(\widetilde{b'}) \precsim \mass(a')^\frac{K+2}{k+1} \precsim r^{K+2}.$$
This completes the proof.
\end{proofof}



\bibliography{bib}
\bibliographystyle{plain}

\quad\\
\textsc{Courant Institute of Mathematical Sciences, New York University, New York, USA}\\
\hspace*{4mm}\textit{E-mail address:} moritz.gruber@cims.nyu.edu

\end{document}